\newtheorem{theorem}{Theorem}[section]
\newtheorem{lemma}[theorem]{Lemma}
\newtheorem{corollary}[theorem]{Corollary}
\newtheorem{definition}{Definition}[section]
\newtheorem{remark}{Remark}[section]
\newcommand{\norm}[1]{\left\Vert#1\right\Vert}
\newcommand{\norml}[2]{\left\Vert#1\right\Vert_{L^2(#2)}}
\newcommand{\abs}[1]{\bigl\vert#1\bigr\vert}
\newcommand{\pd}[1]{\left\langle #1\right\rangle}
\newcommand{\pdjj}[2]{\left\langle \left[#1\right], \left[#2\right]\right\rangle_e}
\newcommand{\set}[1]{\left\{#1\right\}}
\newcommand{\jm}[1]{\left[#1\right]}
\newcommand{\tuh}{\tilde{u}_h}
\newcommand{\csta}{C_{\rm sta}}
\newcommand{\db}{\displaybreak[0]}
\newcommand{\nn}{\nonumber}
\newcommand{\De}{\Delta}
\newcommand{\ep}{\varepsilon}
\newcommand{\ga}{\gamma}
\newcommand{\Ga}{\Gamma}
\newcommand{\La}{\Lambda}
\newcommand{\na}{\nabla}
\newcommand{\om}{\omega}
\newcommand{\Om}{\Omega}
\newcommand{\pa}{\partial}
\newcommand{\pr}{\prime}
\newcommand{\ta}{\theta}
\newcommand{\vp}{\varphi}
\renewcommand{\i}{{\rm\mathbf i}}
\DeclareMathOperator{\re}{{Re}}
\DeclareMathOperator{\im}{{Im}}
\newcommand{\bR}{\mathbf{R}}
\newcommand{\p}{\partial}
\newcommand{\cT}{\mathcal{T}}
\newcommand{\cE}{\mathcal{E}}
\newcommand{\hl}{\underline{h}}
\newcommand{\gal}{\underline{\gamma}}
\newcommand{\uhfem}{u_h^{\rm FEM}}
\title{Continuous Interior Penalty Finite Element Methods for the Helmholtz Equation with Large
Wave Number}
\author{
Haijun Wu
\thanks{Department of Mathematics, Nanjing University, Jiangsu,
210093, P.R. China. ({\tt hjw@nju.edu.cn}).}}
\begin{document}
\date{}
\maketitle


\setcounter{page}{1}

\begin{abstract}
This paper develops and analyzes some continuous interior penalty finite element methods (CIP-FEMs) using piecewise linear polynomials for the Helmholtz equation with
the first order absorbing boundary condition in two and three dimensions. The novelty of the proposed methods is to use complex penalty parameters with positive imaginary parts.
It is proved that, if the penalty parameter is a pure imaginary number $\i\ga$ with $0<\ga\le C$, then the proposed CIP-FEM is stable (hence well-posed) without any mesh constraint. Moreover the method  satisfies the error estimates $C_1kh+C_2k^3h^2$ in the $H^1$-norm when $k^3h^2\le C_0$ and $C_1kh+\frac{C_2}{\ga}$ when $k^3h^2> C_0$ and $kh$ is bounded, where $k$ is the wave number, $h$ is the mesh size, and the $C$'s are positive constants independent of $k$, $h$, and $\ga$. Optimal order $L^2$ error estimates are also derived. The analysis is also applied if the penalty parameter is a complex number with positive imaginary part. By taking $\ga\to 0+$, the above estimates are extended to the linear finite element method under the condition $k^3h^2\le C_0$. Numerical results are provided to verify the theoretical findings. It is shown that the penalty parameters may be tuned to greatly reduce the pollution errors.  
\end{abstract}

{\bf Key words.} 
Helmholtz equation, large wave number, 
continuous interior penalty finite element methods, pre-asymptotic error estimates

{\bf AMS subject classifications. }
65N12, 
65N15, 
65N30, 
78A40  


\section{Introduction}\label{sec-1} The  problem of short waves (or waves  with high wave numbers) in acoustics, electromagnetics or surface water wave applications was listed as an unsolved problem in finite element methods (FEMs) in 2000 by Zienkiewicz in his review paper \cite{zienkiewicz00}. It still remains open although some big progresses have been made since then. In this paper,
we consider the following Helmholtz problem:
\begin{alignat}{2}
-\De u - k^2 u &=f  &&\qquad\mbox{in  } \Om,\label{e1.1}\\
\frac{\pa u}{\pa n} +\i k u &=g &&\qquad\mbox{on } \Ga,\label{e1.2}
\end{alignat}
where $\Om\subset \bR^d,\, d=2,3$ is a polygonal/polyhedral
domain,
$\Ga:=\pa\Om$, $\i=\sqrt{-1}$ denotes the imaginary unit, and $n$
denotes the unit outward normal
to $\pa\Om$. The above Helmholtz problem is an approximation of the  following
acoustic scattering problem (with time dependence $e^{\i\om t}$):
\begin{alignat}{2}\label{e1.1a}
-\De u- k^2 u &= f \qquad &&\mbox{in } \bR^d,\\
\sqrt{r}\Bigl( \frac{\p (u-u^{\rm inc})}{\p r} + \i k (u-u^{\rm inc})\Bigr) &\rightarrow 0 &&\mbox{as }
r=|x|\rightarrow \infty, \label{e1.2a}
\end{alignat}
where  $u^{\rm inc}$ is the incident wave and $k$ is known as the wave number. The Robin boundary condition \eqref{e1.2} is known as the
first order approximation of the radiation condition \eqref{e1.2a} (cf. \cite{em79}).
We remark that the Helmholtz problem \eqref{e1.1}--\eqref{e1.2} also arises in applications
 as a consequence of frequency domain treatment of attenuated scalar waves 
(cf. \cite{dss94}).  

The difficulties of FEMs applied to the Helmholtz problem \eqref{e1.1}--\eqref{e1.2} with large wave number lie in both their theoretical analysis and numerical efficiency mainly due to the high indefiniteness of the problem. While for the one dimensional ($1$-D) case the FEMs have been well understood.  Ihlenburg and Babu{\v{s}}ka \cite{ib95a} proved that the linear FEM for a $1$-D Helmholtz problem satisfies the following error estimate under the mesh constraint $kh\le 1$.
\begin{equation}\label{e1.3}
\norml{\na(u-\uhfem)}{\Om}\le C_1kh+C_2k^3h^2.
\end{equation}
Here $h$ is the mesh size and $C_i, i=1,2$ are positive constants independent of $k$ and $h$.
Note that the first term on the right hand side of \eqref{e1.3} is of the same order as the interpolation error in $H^1$-seminorm. It dominates the error bound only if $k^2h$ is small enough. The second term $C_2k(kh)^2$ dominates if $kh$ is fixed and $k$ is large enough. We remark that the condition of fixed $kh$, i.e., several points per wavelength, is  sometimes used as the ``rule of thumb" in the context of the numerical treatment of the Helmholtz equation. The estimate \eqref{e1.3} says that this rule of thumb may give wrong results for large wave number $k$. The second term is called the pollution error of the finite element solution.  In one dimension, the pollution effect can be eliminated completely by a suitable modification of the discrete bilinear form (cf. \cite{bips95,bs00}). However, the story for two and three dimensional Helmholtz problems is much different. It is shown that, in two dimensions, the pollution effect can be reduced substantially but cannot be avoided in principle (cf. \cite{bips95,bs00}). As for the error estimates, to the best of the author's knowledge, no analysis for the linear FEM in two or three dimensions has been done when $k^2h$ is large.  Note that the $H^1$- and $L^2$- error estimates can be derived by the so-called Schatz argument if $k^2h$ is small enough  (cf. \cite{bgt85, sch74}) but this condition is too strict for large $k$. We refer to \cite{thompson06} for a nice review on various FEMs for time-harmonic acoustics governed by the Helmholtz equation. For results on $hp$-FEMs, we refer to \cite{ms10} but will not discuss here since we concern only methods using linear elements in this paper. The author would like to mention that, Engquist and Ying \cite{ey11,ey10_arxiv} proposed recently some sweeping preconditioners for central difference schemes for the Helmholtz equation which have linear application cost and the preconditioned iterative solver (GMRES) converges in a number of iterations that is essentially independent of the number of unknowns or the frequency. Although the sweeping techniques are well possible (or have already been) applied to the linear FEM to provide efficient fast solvers, this combination is a not good candidate for efficient algorithm for the Helmholtz problem with large wave number, since the linear FEM itself is inefficient due to its (big) pollution effect. Next, we will not go any further on the issue of fast solvers and focus on the stability and error analyses of the schemes based on linear elements.    

In \cite{fw09, fw11}, Feng and the author proposed and analyzed some interior penalty discontinuous Galerkin (IPDG) methods using piecewise linear polynomials for the problem \eqref{e1.1}--\eqref{e1.2} in two and three dimensions.  It was proved that the proposed methods are unconditionally (with respect to mesh size $h$) stable and well-posed for all wave numbers $k > 0$. Moreover, under suitable assumptions on the penalty parameters, the following error estimates were proved.
  \begin{align*}
   \norm{u-u_h}_{1,h}&\le C_1kh+C_2k^{8/3}h^{4/3},&\text{ if }  kh\lesssim 1,\\
   \norm{u-u_h}_{1,h}&\le C_1kh+C_2k^{3}h^{2},&\text{ if }  k^3h^2\le C_0.
  \end{align*}
where $\norm{\cdot}_{1,h}$ is some broken $H^1$-norm, $A\lesssim B$ means $A\le C\,B$, and the constants $C$'s are positive and independent of $k$, $h$, and the penalty parameters. Numerical tests show that it is possible to greatly reduce the pollution error and achieve better numerical results than FEMs by tuning the penalty parameters (see \cite{fw09}). The discontinuous Galerkin (DG) methods (sometimes called discontinuous finite element methods) which are initiated in seventies of the last century (cf. \cite{b73,bz73,baker77,dd76, w78, arnold82}), use piecewise polynomials (or problem dependent functions) as trial and test functions. The continuity of the discrete solution across the interior edges/faces of elements is enforce weakly by introducing penalty terms or numerical fluxes.
As is well known now, DG methods have several advantages over the (continuous) FEMs, such as, local mass conservation, flexibilities in constructing trial and test spaces and meshes, additional parameters that may be tuned for some particular purposes. While one disadvantage is that a DG method usually has larger number of total degrees of freedom (DOFs) than the FEM. For example, on a given triangulation of $\Om$, the number of total DOFs of the linear IPDG method is about six times of that of the linear FEM in two dimensions and about $20$ times in three dimensions. We refer the reader to \cite{aldr06, acdg10, fx, gm, perugia07, dpg11} and the references therein for other works on DG methods for Helmholtz problems.

The purpose of this paper is to propose and analyze a linear continuous interior penalty finite element method (CIP-FEM) for the Helmholtz problem \eqref{e1.1}--\eqref{e1.2}. The CIP-FEM uses the same continuous piecewise linear finite element space as the linear FEM but modifies the sesquilinear of the FEM by adding a penalty term on the jumps of the flux across the interior edges/faces between elements, i.e., 
\begin{align*}
J(u,v):=\i\ga\sum_{e\in\cE_h^{I}} h_e \int_e\jm{\frac{\pa u}{\pa n_e}}\jm{\frac{\pa v}{\pa n_e}},
\label{eJ}\db
\end{align*}
where $\ga>0$ and $\cE_h^I$ is the set of interior edges/faces. Note that the CIP-FEM in this paper uses a pure-imaginary penalty parameter $\i\ga$ instead of a real one as the usual CIP-FEM does. This is helpful for theoretical analysis and numerical stability. It is should be remark that if the penalty parameter $\i\ga$
 is replaced by a complex number with positive imaginary part, the ideas of the paper still apply.
Here we set its
real part to be zero in the theoretical analysis for the ease of presentation. Let $u_h$ be the CIP finite element solution and let $\uhfem$ be the finite element solution. The following results are obtained.
\begin{itemize}
\item[(i)] The CIP-FEM attains a unique solution for any $k>0$, $h>0$ and $\ga>0$.
\item[(ii)]  There exists
a constant $C_0>0$ independent of $k$, $h$, and $\ga$, 
such that if $k\gtrsim 1$ and
$0<\ga\lesssim 1$, then the following stability and error
estimates hold:
\begin{align*}
\norm{u_h}_{1,h}&\lesssim  \left\{\begin{array}{ll}
				\norml{f}{\Om} + \norml{g}{\Ga}, &\text{ if }k^3 h^2\le  C_0,\\
				\dfrac{1}{\ga}(\norml{f}{\Om} + \norml{g}{\Ga}), &\text{ if }k^3 h^2>  C_0,
				\end{array}\right.\\
\norm{u-u_h}_{1,h} & \le\left\{\begin{array}{ll}
				C_1 kh+C_2 k^3h^2, &\text{ if }k^3 h^2\le  C_0,\\
				C_1 kh+\dfrac{C_2}{\ga}, &\text{ if }k^3 h^2>  C_0 \text{ and } k h\lesssim 1,
				\end{array}\right.
\end{align*}
where $\norm{v}_{1,h}:=\big( \norml{\na v}{\Om}^2+\abs{J(v,v)}\big)^{1/2}$.
\item[(iii)] Suppose $k^3 h^2\le  C_0$ and $k\gtrsim 1$. Then the following estimates hold for the finite element solution $\uhfem$.
\begin{align*}
\norml{\na\uhfem}{\Om}&\lesssim  (\norml{f}{\Om} + \norml{g}{\Ga}),\\
\norml{\na(u-\uhfem)}{\Om} &\le C_1 kh+C_2 k^3h^2,
\end{align*}
\item[(iv)] Estimates in the $L^2$-norm are also obtained.
\item[(v)]  Numerical tests show that the penalty parameters may be tuned to greatly reduce the pollution errors. 
\end{itemize}
The CIP-FEMs were originally proposed by Douglas and Dupont \cite{dd76} for second order elliptic and parabolic problems and have been shown to have advantages for advection dominated problems \cite{burman05, be05,be07, bfh06, bh04}. Similar interior penalty procedures for FEMs utilizing continuous functions have also been introduced for biharmonic equations \cite[etc]{bz73,bs05}. 

This paper is organized as follows. The CIP-FEM is introduced in Section~\ref{sec-2}. Some stability estimates are derived in Section~\ref{sec-sta} for any $k>0$, $h>0$, and $\ga>0$. In Section~\ref{sec-err}, pre-asymptotic error estimates in $H^1$- and $L^2$-norms are proved for $k>0$, $h>0$, and $\ga>0$ by utilizing the error analysis for an elliptic projection, the stability results for the CIP-FEM, and the triangle inequality. In Section~\ref{sec-SEII}, the stability estimates in Section~\ref{sec-sta} and the error estimates in Section~\ref{sec-err} are improved to be of optimal order under the condition that $k^3h^2$ is small enough by using the technique of so-called  
``stability-error iterative improvement" developed in \cite{fw11}. In Section~\ref{sec-fem}, 
the well-posedness, stability and error estimates for the linear FEM are established  under the condition that $k^3h^2$  is small enough by taking the limits of the estimates for the CIP-FEM as the parameter $\ga\to 0+$.  

Throughout the paper, $C$ is used to denote a generic positive constant
which is independent of $h$, $k$, and the penalty parameters. We also use the shorthand
notation $A\lesssim B$ and $B\gtrsim A$ for the
inequality $A\leq C B$ and $B\geq CA$. $A\simeq B$ is a shorthand
notation for the statement $A\lesssim B$ and $B\lesssim A$. We assume that $k\gtrsim 1$ since we are considering high-frequency problems. For the ease of presentation, we assume that $k$ is constant on the domain $\Om$.

\section{Formulation of continuous interior penalty finite element methods}\label{sec-2}
To formulate our CIP-FEMs, we first introduce some notation. The standard space, norm and inner product notation
are adopted. Their definitions can be found in \cite{bs08,ciarlet78}.
In particular, $(\cdot,\cdot)_Q$ and $\pd{ \cdot,\cdot}_\Sigma$
for $\Sigma\subset \pa Q$ denote the $L^2$-inner product
on complex-valued $L^2(Q)$ and $L^2(\Sigma)$
spaces, respectively. Denote by $(\cdot,\cdot):=(\cdot,\cdot)_\Om$
and $\pd{ \cdot,\cdot}:=\pd{ \cdot,\cdot}_{\p\Om}$. 

Let $\cT_h$ be a family of triangulations of the domain $\Om$
parameterized by $h>0$. For any triangle/tetrahedron $K\in \cT_h$, we define $h_K:=\mbox{diam}(K)$. 
Similarly, for each edge/face $e$ of $K\in \cT_h$, define $h_e:=\mbox{diam}(e)$. Let $h=\max_{K\in\cT_h}h_K$.
We assume that the elements of $\cT_h$ are shape regular. We define
\begin{eqnarray*}
\cE_h^I&:=& \mbox{ set of all interior edges/faces of $\cT_h$},\\
\cE_h^B&:=& \mbox{ set of all boundary edges/faces of $\cT_h$ on $\Ga$}.
\end{eqnarray*}
We also define the jump $\jm{v}$ of $v$ on an interior edge/face
$e=\p K\cap \p K^\pr$ as
\[
\jm{v}|_{e}:=\left\{\begin{array}{ll}
       v|_{K}-v|_{K^\pr}, &\quad\mbox{if the global label of $K$ is bigger},\\
       v|_{K^\pr}-v|_{K}, &\quad\mbox{if the global label of $K^\pr$ is bigger}.
\end{array} \right.
\]
 For every $e=\p K\cap \p K^\pr\in\cE_h^I$, let $n_e$ be the unit outward normal
to edge/face $e$ of the element $K$ if the global label of $K$ is bigger
and of the element $K^\pr$ if the other way around. For every $e\in\cE_h^B$, let $n_e=n$ the unit outward normal to $\pa\Om$.

Now we define the ``energy" space $E$ and the sesquilinear
form $a_h(\cdot,\cdot)$ on $E\times E$ as follows:
\begin{align}
E&:=H^1(\Om)\cap\prod_{K\in\cT_h} H^2(K), \nonumber \\
\label{eah}
a_h(u,v)&:=(\na u,\na v)+  J(u,v)\qquad\forall\, u, v\in E,
\end{align}
where
\begin{align}
J(u,v):=&\sum_{e\in\cE_h^{I}}\i\ga_e h_e \pdjj{\frac{\pa u}{\pa n_e}}{\frac{\pa v}{\pa n_e}},
\label{eJ}\db
\end{align}
and  $\ga_e, e\in\cE_h^I$ are 
nonnegative numbers to be specified later. 

\begin{remark}
(a) The terms in $J(u,v)$ are so-called penalty terms.
The penalty parameter in $J(u,v)$ is
$\i\ga_e$.  So it is a
pure imaginary number with positive imaginary part. It turns out that if it is replaced by a complex number with positive
imaginary part, the ideas of the paper still apply.  Here we set their
real parts to be zero partly because the terms from real parts do not help much
(and do not cause any problem either) in our theoretical analysis and partly for
the ease of presentation. On the other hand, our numerical experiments
in Section \ref{sec-num} indicate that using penalty parameters
with nonzero real parts helps to reduce the pollution effect in the error.

(b) Penalizing the jumps of normal derivatives 
 was used early by Douglas and Dupont \cite{dd76} for second order PDEs and by Babu{\v{s}}ka and Zl\'amal \cite{bz73} for fourth order PDEs in
the context of $C^0$ finite element methods, by Baker \cite{baker77} for fourth order PDEs
and by Arnold \cite{arnold82} for second order
parabolic PDEs in the context of IPDG methods. 

(c) In this paper we consider the scattering problem with time dependence $e^{\i\om t}$, that is, the signs before $\i$'s in the Sommerfeld radiation condition \eqref{e1.2a} and its first order approximation \eqref{e1.2} are positive. If we consider the scattering problem with time dependence $e^{-\i\om t}$, that is, the signs before $\i$'s in  \eqref{e1.2a} and  \eqref{e1.2} are negative, then the penalty parameters should be complex numbers with  negative imaginary parts.
\end{remark}

It is clear that $J(u,v)=0$ if $u\in H^2(\Om)$ and $v\in E$. Therefore, if $u\in H^2(\Om)$ is the solution of \eqref{e1.1}--\eqref{e1.2}, then 
\begin{equation}
a_h(u,v) - k^2(u,v) +\i k \pd{ u,v}
=(f,v)+\pd{g, v},\qquad\forall v\in E.
\label{2.6}
\end{equation}

Let $V_h$ be the linear finite element space, that is,
\[
V_h:=\set{v_h\in H^1(\Om) :\; v_h|_K\in P_1(K),\,\forall K\in \cT_h}.
\]
where $P_1(K)$ denote the set of all linear
polynomials on $K$. Then our CIP-FEMs are defined as follows
: Find $u_h\in V_h$ such that
\begin{equation}\label{eicg}
a_h(u_h,v_h) - k^2(u_h,v_h) +\i k \pd{ u_h,v_h}
=(f, v_h)+\pd{g, v_h},  \qquad\forall v_h\in V_h.
\end{equation}

The following semi-norm on the space $E$ is useful for the subsequent analysis:
\begin{align}
\norm{v}_{1,h}:=&\bigg( \norml{\na v}{\Om}^2+\sum_{e\in\cE_h^{I}}\ga_e h_e \norml{\jm{\frac{\pa v}{\pa n_e}}}{e}^2\bigg)^{1/2}.
 \label{e2.5}
\end{align}

In the next three sections, we shall consider the stability and
error analysis for the above CIP-FEMs. Especially, we are interested
in knowing how the stability constants and error constants depend on the wave
number $k$ (and mesh size $h$, of course) and what are the ``optimal" relationship
between mesh size $h$ and the wave number $k$. For the ease of presentation, we assume that $\ga_e\simeq \ga$ for some positive constant $\ga$ and that $h_K\simeq h$. 

\section{Stability estimates}\label{sec-sta} We first recall the stability estimates for the original Helmholtz problem \eqref{e1.1}--\eqref{e1.2} (cf. \cite{cf06,hetmaniuk07}).
\begin{theorem}\label{tstau}
Suppose $\Om\subset \bR^d$ is a strictly star-shaped domain. Then the solution
$u$ to the problem \eqref{e1.1}--\eqref{e1.2} satisfies
\begin{eqnarray}\label{e2.1}
\|u\|_{H^j(\Om)} \lesssim k^{j-1} 
\bigl( \norml{f}{\Om} + \norml{g}{\Ga} \bigr)
\end{eqnarray}
for $j=0, 1$ if $u\in H^{3/2+\ep}(\Om)$ for some $\ep>0$.  \eqref{e2.1} also holds for $j=2$ if $u\in H^2(\Om).$
\end{theorem}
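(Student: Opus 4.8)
The approach I would take is the classical Rellich--Morawetz (Pohozaev) multiplier technique, which is exactly what underlies \cite{cf06,hetmaniuk07}. Normalize so that $\Om$ is strictly star-shaped with respect to the origin, i.e.\ there is a constant $c_0>0$ with $x\cdot n\ge c_0$ for a.e.\ $x\in\Ga$, and set $R:=\max_{x\in\overline{\Om}}\abs{x}$. The first, easy step is to test the weak form of \eqref{e1.1}--\eqref{e1.2} against $v=u$ (this is \eqref{2.6} with $J\equiv 0$ when $u\in H^2$, and it remains valid for $u\in H^{3/2+\ep}$ since it is just the $H^1$ weak form) and to separate real and imaginary parts:
\[
\norml{\na u}{\Om}^2-k^2\norml{u}{\Om}^2=\re\bigl((f,u)+\pd{g,u}\bigr),\qquad k\norml{u}{\Ga}^2=\im\bigl((f,u)+\pd{g,u}\bigr).
\]
The second identity with Young's inequality gives $k\norml{u}{\Ga}^2\lesssim\norml{f}{\Om}\norml{u}{\Om}+k^{-1}\norml{g}{\Ga}^2$, which I will use repeatedly to absorb boundary contributions; the first identity is kept for the final step.

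The heart of the matter is the Rellich identity obtained by testing $-\De u-k^2u=f$ against $x\cdot\na\bar u$ and integrating by parts (legitimate for $u\in H^{3/2+\ep}$ after a density argument). For $d=2,3$ it reads
\begin{align*}
(d-2)\norml{\na u}{\Om}^2-dk^2\norml{u}{\Om}^2=&-2\re\!\int_\Om f\,\overline{x\cdot\na u}-2\re\!\int_\Ga\frac{\pa u}{\pa n}\,\overline{x\cdot\na u}\\
&+\int_\Ga(x\cdot n)\abs{\na u}^2-k^2\!\int_\Ga(x\cdot n)\abs{u}^2.
\end{align*}
Subtracting $(d-2)$ times the real-part identity from the previous step cancels $\norml{\na u}{\Om}^2$ and leaves $2k^2\norml{u}{\Om}^2$ on the left. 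The crucial boundary contribution $2\re\int_\Ga\frac{\pa u}{\pa n}\overline{x\cdot\na u}-\int_\Ga(x\cdot n)\abs{\na u}^2$ is handled by decomposing both $\na u$ and $x$ on $\Ga$ into normal and tangential parts and completing the square; one finds it is bounded above by $\int_\Ga\frac{\abs{x}^2}{x\cdot n}\abs{\frac{\pa u}{\pa n}}^2\le\frac{R^2}{c_0}\norml{\frac{\pa u}{\pa n}}{\Ga}^2$, and here strict star-shapedness ($x\cdot n\ge c_0$) is used decisively. Inserting the Robin condition $\frac{\pa u}{\pa n}=g-\i ku$ turns this, and the term $k^2\int_\Ga(x\cdot n)\abs{u}^2\le Rk^2\norml{u}{\Ga}^2$, into quantities $\lesssim\norml{g}{\Ga}^2+k^2\norml{u}{\Ga}^2$, and $k^2\norml{u}{\Ga}^2=k\cdot k\norml{u}{\Ga}^2$ is absorbed by the first step; bounding the remaining terms as $2\re\int_\Om f\,\overline{x\cdot\na u}\le2R\norml{f}{\Om}\norml{\na u}{\Om}$ and $\abs{(f,u)+\pd{g,u}}\lesssim\norml{f}{\Om}\norml{u}{\Om}+k^{-1}\norml{g}{\Ga}^2+k\norml{u}{\Ga}^2$, one arrives at
\[
k^2\norml{u}{\Om}^2\lesssim\norml{f}{\Om}\norml{\na u}{\Om}+k\norml{f}{\Om}\norml{u}{\Om}+\norml{g}{\Ga}^2.
\]

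To close, feed the real-part identity in the form $\norml{\na u}{\Om}^2\lesssim k^2\norml{u}{\Om}^2+\norml{f}{\Om}\norml{u}{\Om}+\norml{g}{\Ga}^2$ into the last display and apply Young's inequality (using $k\gtrsim 1$): this yields $k\norml{u}{\Om}\lesssim\norml{f}{\Om}+\norml{g}{\Ga}$, i.e.\ the $j=0$ estimate, and then $\norml{\na u}{\Om}\lesssim\norml{f}{\Om}+\norml{g}{\Ga}$, i.e.\ the $j=1$ estimate. For $j=2$, I would combine these with the standard $H^2$ a priori estimate for the Poisson problem $-\De u=f+k^2u$ carrying the Robin data $\frac{\pa u}{\pa n}=g-\i ku$, obtaining $\norm{u}_{H^2(\Om)}\lesssim\norml{\De u}{\Om}+\norm{u}_{H^1(\Om)}+(\text{trace of boundary data})\lesssim k^2\norml{u}{\Om}+\norml{f}{\Om}+\cdots\lesssim k(\norml{f}{\Om}+\norml{g}{\Ga})$, which is exactly where the hypothesis $u\in H^2(\Om)$ (and a little extra smoothness of $g$, harmless for the scattering data of interest) is needed.

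The step I expect to be the main obstacle is making the multiplier computation rigorous at the stated low regularity: for $u$ only in $H^{3/2+\ep}(\Om)$ the trace $\na u|_\Ga$ barely exists and the pointwise Rellich identity and the square-completion estimate are not literally valid, so one argues by approximation (on a smooth star-shaped exhaustion of $\Om$, or by mollifying $u$) and, in the polygonal/polyhedral case, checks that the boundary term $\int_\Ga(x\cdot n)\abs{\na u}^2$ and the square completion go through face by face. This is precisely why the \emph{strict} star-shapedness hypothesis is imposed --- it guarantees $x\cdot n\ge c_0>0$ a.e.\ on $\Ga$ --- and the technical details of the regularization are carried out in \cite{cf06,hetmaniuk07}.
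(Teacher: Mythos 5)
The paper does not prove Theorem 3.1 itself --- it quotes it from \cite{cf06,hetmaniuk07} and only remarks that the key idea is to test with $v=u$ and $v=(x-x_\Om)\cdot\na u$ and use the Rellich identity --- and your Rellich--Morawetz multiplier argument, including the square-completion bound $\int_\Ga\frac{|x|^2}{x\cdot n}\abs{\frac{\pa u}{\pa n}}^2$ where strict star-shapedness enters, is precisely that approach and is carried out correctly. The closing steps (absorbing $k\norml{u}{\Ga}^2$ via the imaginary part, Young's inequality with $k\gtrsim 1$, and elliptic regularity for $j=2$) are all sound, so your proposal matches the cited proof in substance.
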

The key idea in their analysis is to test \eqref{e1.1} by $v=u$ and $v=(x-x_\Om)\cdot\na u$, respectively, and use the Rellich identity (for the Laplacian), where $x_\Om$ is a point such that the domain $\Om$ is strictly star-shaped with respect to it. The idea has been successfully applied to the discontinuous Galerkin methods (cf. \cite{fw09,fw11, fx}) and to the spectral-Galerkin methods (cf. \cite{sw07}). As for our CIP-FEMs \eqref{eicg}, although the test function $v_h=u_h$ can still be used, the test function $v_h=(x-x_\Om)\cdot\na u_h$ does not apply since it is discontinuous and hence not in the test space $V_h$. For stability results for other types of boundary conditions we refer to \cite{cm08,ms10}.

Next, we derive stability estimates for the CIP-FEMs \eqref{eicg}. Note that $u_h$ is piecewise linear on $\cT_h$ and hence $\De u_h=0$ on each element $K
\in \cT_h$. We will show, by using integration by parts elementwisely, that $\norml{\na u_h}{\Om}$ may be bounded by the jumps of $\frac{\pa u_h}{\pa n_e}$ across each interior edge/face $e\in\cE_h^I$ and the $L^2(\Om)$-norm and the $L^2(\Ga)$-norm of $u_h$. Moreover the coefficient before $\norml{u_h}{\Om}$ can be controlled. On the other hand, by taking the test function $v_h=u_h$ in \eqref{eicg}, we may derive some reverse inequalities, that is, bound the jumps of $\frac{\pa u_h}{\pa n_e}$ across $e\in\cE_h^I$ and the $L^2$ norms of $u_h$ by $\norml{\na u_h}{\Om}$ and the given data. Then the desire stability estimates follow by combining them.

 We first bound $\norml{\na u_h}{\Om}$ by using integration by parts on each element. 
\begin{lemma}\label{lem3.2}  For any $0<\ep<1$, there exists a constant $c_\ep$ such that
\begin{equation*}
\norml{\na u_h}{\Om}^2\le \ep k^2\norml{u_h}{\Om}^2+ \frac{c_\ep}{k h}\,k\norml{u_h}{\Ga}^2+\frac{c_\ep}{k^2 h^2\ga}\sum_{e\in\cE_h^I}\ga_e h_e\norml{\jm{\frac{\pa u_h}{\pa n_e}}}{e}^2 .
\end{equation*}
\end{lemma}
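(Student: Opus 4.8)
The plan is to estimate $\norml{\na u_h}{\Om}^2 = (\na u_h,\na u_h)$ by integrating by parts element-by-element. Since $u_h$ is piecewise linear, $\De u_h = 0$ on each $K\in\cT_h$, so for each element
\[
\int_K \abs{\na u_h}^2 = \int_{\pa K} \frac{\pa u_h}{\pa n_K}\,\bar u_h .
\]
Summing over $K\in\cT_h$, the contributions from interior edges/faces combine into terms involving the jumps $\jm{\pa u_h/\pa n_e}$ (because $u_h$ itself is continuous across interior edges, the average of $\bar u_h$ survives while the normal derivative produces a jump), and the contributions from boundary edges give a term $\pd{\pa u_h/\pa n, \bar u_h}_\Ga$. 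So after rearranging,
\[
\norml{\na u_h}{\Om}^2 = \sum_{e\in\cE_h^I}\int_e \jm{\frac{\pa u_h}{\pa n_e}}\,\av{\bar u_h}
+ \pd{\frac{\pa u_h}{\pa n}, \bar u_h}_\Ga ,
\]
up to the usual sign/labeling bookkeeping. (If averages of $u_h$ are not what naturally appears, one writes $\bar u_h|_e$ against the jump and uses continuity to replace it by the average, or one simply keeps $u_h$ restricted to one side; the point is only that a trace of $u_h$ multiplies the jump.)

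Next I would bound each piece. For the interior term, apply Cauchy--Schwarz on each edge and insert the weights $\ga_e h_e$ artificially: $\abs{\int_e \jm{\pa u_h/\pa n_e}\av{\bar u_h}} \le (\ga_e h_e)^{-1/2}\norml{\jm{\pa u_h/\pa n_e}}{e}\cdot (\ga_e h_e)^{1/2}\norml{\av{u_h}}{e}$. The factor $(\ga_e h_e)^{1/2}\norml{\av{u_h}}{e}$ is controlled using the trace/inverse inequality on the shape-regular mesh, $\norml{u_h}{e}^2 \lesssim h_e^{-1}\norml{u_h}{K}^2$ (or with a $\norml{\na u_h}{K}$ contribution), giving $(\ga_e h_e)^{1/2}\norml{\av{u_h}}{e} \lesssim \ga_e^{1/2}\norml{u_h}{\om_e}$ where $\om_e$ is the edge patch. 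Summing and using $\ga_e\simeq\ga$, $h_e\simeq h$, then Young's inequality with a small parameter, one extracts $\ep k^2\norml{u_h}{\Om}^2$ plus a term of the form $\frac{c_\ep}{k^2h^2\ga}\sum_e \ga_e h_e\norml{\jm{\pa u_h/\pa n_e}}{e}^2$ — matching the claimed bound, since balancing $\ga^{1/2}\norml{u_h}{\Om}$ against $(\ga h)^{-1/2}\norml{\jm{\cdot}}{e}$ via Young's inequality produces exactly the $k^2$ on one side and $\frac{1}{k^2h^2\ga}$ on the other. For the boundary term, Cauchy--Schwarz plus the inverse trace estimate $\norml{\pa u_h/\pa n}{\Ga}\lesssim h^{-1/2}\norml{\na u_h}{\Om}$ (boundary-element inverse inequality) and $\norml{u_h}{\Ga}$ gives $\abs{\pd{\pa u_h/\pa n,\bar u_h}_\Ga}\lesssim h^{-1/2}\norml{\na u_h}{\Om}\norml{u_h}{\Ga}$; Young's inequality then yields $\frac14\norml{\na u_h}{\Om}^2 + \frac{C}{h}\norml{u_h}{\Ga}^2$, and writing $\frac{1}{h}\norml{u_h}{\Ga}^2 = \frac{1}{kh}\cdot k\norml{u_h}{\Ga}^2$ produces the middle term $\frac{c_\ep}{kh}\,k\norml{u_h}{\Ga}^2$. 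Absorbing the $\frac14\norml{\na u_h}{\Om}^2$ into the left side finishes the estimate.

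The main obstacle is organizing the integration-by-parts bookkeeping cleanly: keeping track of which side of each interior edge is labeled larger (so that the jump $\jm{\cdot}$ carries a consistent sign) and verifying that the continuous part of $u_h$ really does pair off against the jump of the normal derivative with no leftover average-of-normal-derivative term — this works precisely because the DG-type bilinear identity here only has the jump penalty and $u_h\in H^1(\Om)$. The other delicate point is tracking the exact powers of $k$ and $h$ through the two applications of Young's inequality so that the artificial insertion of $\ga_e h_e$ lands one exactly at $\ep k^2$ and $\frac{c_\ep}{k^2h^2\ga}$ rather than some other combination; this is forced once one decides the $\norml{u_h}{\Om}$ term should carry the $k^2$ scale (natural, since the stability argument will later pair it against the $k^2(u_h,u_h)$ term in \eqref{eicg}). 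Everything else — trace inequalities, inverse inequalities, shape-regularity constants — is routine.
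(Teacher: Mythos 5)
Your proposal is correct and follows essentially the same route as the paper's proof: elementwise integration by parts using $\De u_h=0$ on each $K$, identification of the interior jump terms and the boundary term (with the single-valued trace of $u_h$ multiplying the jump of the normal derivative), Cauchy--Schwarz plus trace/inverse inequalities on each edge/face, and two applications of Young's inequality weighted so as to produce $\ep k^2\norml{u_h}{\Om}^2$, $\frac{c_\ep}{kh}k\norml{u_h}{\Ga}^2$, and $\frac{c_\ep}{k^2h^2\ga}\sum_e\ga_eh_e\norml{\jm{\pa u_h/\pa n_e}}{e}^2$, with the residual $\norml{\na u_h}{\Om}^2$ absorbed into the left-hand side. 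No gaps.
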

\begin{proof}
Noting that $u_h$ is piecewise linear, we have
\begin{align*}
\norml{\na u_h}{\Om}^2=&\sum_{K\in\cT_h}\int_K\abs{\na u_h}^2=\sum_{K\in\cT_h}\int_{\pa K}\frac{\pa u_h}{\pa n} u_h\\
=&\sum_{e\in\cE_h^B}\int_e\frac{\pa u_h}{\pa n_e} u_h+\sum_{e\in\cE_h^I}\int_e\jm{\frac{\pa u_h}{\pa n_e}} u_h\\
\le&\sum_{e\in\cE_h^B}\norml{\na u_h}{e}\norml{u_h}{e}+\sum_{e\in\cE_h^I}\norml{\jm{\frac{\pa u_h}{\pa n_e}}}{e}\norml{u_h}{e}.
\end{align*}
For any edge/face $e\in\cE_h$, let $K_e\in\cT_h$ be an element containing $e$. From the trace inequality and the inverse inequality,
\begin{align*}
&\norml{\na u_h}{\Om}^2
\le C\sum_{e\in\cE_h^B}h_e^{-1/2}\norml{\na u_h}{K_e}\norml{u_h}{e}+C\sum_{e\in\cE_h^I}h_e^{-1/2}\norml{\jm{\frac{\pa u_h}{\pa n_e}}}{e}\norml{u_h}{K_e}\\
&\quad\le C h^{-1/2}\norml{\na u_h}{\Om}\norml{u_h}{\Ga}+C\ga^{-1/2} h^{-1}\bigg(\sum_{e\in\cE_h^I}\ga_e h_e\norml{\jm{\frac{\pa u_h}{\pa n_e}}}{e}^2\bigg)^{1/2}\norml{u_h}{\Om}\\&\quad\le \ep\norml{\na u_h}{\Om}^2+ \frac{C}{\ep k h}\,k\norml{u_h}{\Ga}^2\\
&\qquad+\ep(1-\ep)k^2\norml{u_h}{\Om}^2+\frac{C}{\ep(1-\ep) k^2 h^2\ga}\sum_{e\in\cE_h^I}\ga_e h_e\norml{\jm{\frac{\pa u_h}{\pa n_e}}}{e}^2
\end{align*}
which implies that Lemma~\ref{lem3.2} holds.
\end{proof}

Then we derive some reverse inequalities by taking $v_h=u_h$ in \eqref{eicg}.
\begin{lemma}\label{lem3.1}
Let $u_h\in V_h$ solve \eqref{eicg}. Then,
\begin{align}\label{e3.1}
&k^2 \norml{u_h}{\Om}^2\le 2\norml{\na u_h}{\Om}^2 +\frac{C}{k^2}\norml{f}{\Om}^2+\frac{C}{k}\norml{g}{\Ga}^2,\\
&\sum_{e\in\cE_h^I}\ga_e h_e\norml{\jm{\frac{\pa u_h}{\pa n_e}}}{e}^2+k\norml{u_h}{\Ga}^2\label{e3.2} \\
&\qquad\qquad\le \frac{C}{k}\norml{f}{\Om}\norml{\na u_h}{\Om} +\frac{C}{k^2}\norml{f}{\Om}^2+\frac{C}{k}\norml{g}{\Ga}^2. \nn
\end{align}
\end{lemma}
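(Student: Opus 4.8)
The plan is to take $v_h = u_h$ in the discrete equation \eqref{eicg} and extract the two estimates by looking at the imaginary and real parts separately. Setting $v_h = u_h$ in \eqref{eicg} gives
\[
\norml{\na u_h}{\Om}^2 + J(u_h,u_h) - k^2\norml{u_h}{\Om}^2 + \i k\norml{u_h}{\Ga}^2 = (f,u_h) + \pd{g,u_h}.
\]
Recall that $J(u_h,u_h) = \i\sum_{e\in\cE_h^I}\ga_e h_e\norml{\jm{\pa u_h/\pa n_e}}{e}^2$ is purely imaginary with nonnegative imaginary part. Taking imaginary parts of the identity yields
\[
\sum_{e\in\cE_h^I}\ga_e h_e\norml{\jm{\tfrac{\pa u_h}{\pa n_e}}}{e}^2 + k\norml{u_h}{\Ga}^2 = \im\bigl((f,u_h) + \pd{g,u_h}\bigr) \le \norml{f}{\Om}\norml{u_h}{\Om} + \norml{g}{\Ga}\norml{u_h}{\Ga}.
\]
For \eqref{e3.2} I would then bound the right-hand side: the term $\norml{g}{\Ga}\norml{u_h}{\Ga}$ is absorbed via Young's inequality $\norml{g}{\Ga}\norml{u_h}{\Ga} \le \tfrac12 k\norml{u_h}{\Ga}^2 + \tfrac{C}{k}\norml{g}{\Ga}^2$, moving the first piece to the left side. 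The term $\norml{f}{\Om}\norml{u_h}{\Om}$ needs $\norml{u_h}{\Om}$ controlled; here I would use the crude a priori bound $k\norml{u_h}{\Om} \lesssim \norml{\na u_h}{\Om} + \tfrac1k\norml{f}{\Om} + \tfrac1{\sqrt k}\norml{g}{\Ga}$ — which is essentially \eqref{e3.1}, so these two estimates should really be proved together — giving $\norml{f}{\Om}\norml{u_h}{\Om} \lesssim \tfrac1k\norml{f}{\Om}\norml{\na u_h}{\Om} + \tfrac1{k^2}\norml{f}{\Om}^2 + \tfrac1{k^{3/2}}\norml{f}{\Om}\norml{g}{\Ga}$, and the last cross term is handled by Young's inequality into $\tfrac1{k^2}\norml{f}{\Om}^2 + \tfrac1k\norml{g}{\Ga}^2$. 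This produces \eqref{e3.2}.

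For \eqref{e3.1} I would take real parts of the same identity:
\[
k^2\norml{u_h}{\Om}^2 = \norml{\na u_h}{\Om}^2 - \re\bigl((f,u_h) + \pd{g,u_h}\bigr) \le \norml{\na u_h}{\Om}^2 + \norml{f}{\Om}\norml{u_h}{\Om} + \norml{g}{\Ga}\norml{u_h}{\Ga}.
\]
On the right, I would use Young's inequality on $\norml{f}{\Om}\norml{u_h}{\Om} \le \tfrac12 k^2\norml{u_h}{\Om}^2 + \tfrac{C}{k^2}\norml{f}{\Om}^2$ to absorb into the left side, and on $\norml{g}{\Ga}\norml{u_h}{\Ga}$ I would use the already-established control of $k\norml{u_h}{\Ga}^2$ from the imaginary-part computation — i.e., $\norml{g}{\Ga}\norml{u_h}{\Ga} \le \sqrt k\,\norml{u_h}{\Ga}\cdot\tfrac1{\sqrt k}\norml{g}{\Ga} \lesssim k\norml{u_h}{\Ga}^2 + \tfrac1k\norml{g}{\Ga}^2 \lesssim (\text{RHS of \eqref{e3.2}}) + \tfrac1k\norml{g}{\Ga}^2$. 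After absorbing and using $ab \le \epsilon a^2 + C_\epsilon b^2$ on the resulting $\tfrac1k\norml{f}{\Om}\norml{\na u_h}{\Om}$ term against $\norml{\na u_h}{\Om}^2$ (with a small enough constant, recalling $k\gtrsim 1$), one arrives at $k^2\norml{u_h}{\Om}^2 \le 2\norml{\na u_h}{\Om}^2 + \tfrac{C}{k^2}\norml{f}{\Om}^2 + \tfrac{C}{k}\norml{g}{\Ga}^2$, which is \eqref{e3.1}.

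The main obstacle — really a bookkeeping subtlety rather than a deep difficulty — is the circularity between \eqref{e3.1} and \eqref{e3.2}: bounding the data term in \eqref{e3.2} needs an $L^2$-type bound on $u_h$, while proving \eqref{e3.1} cleanly needs the boundary control coming from \eqref{e3.2}. The resolution is to derive the two inequalities simultaneously from the single identity $v_h = u_h$, first extracting a raw form of \eqref{e3.1} (keeping $\norml{u_h}{\Om}$ on the right in unabsorbed form), then using it inside the imaginary-part estimate, then cycling back; alternatively one proves \eqref{e3.1} with the constant $2$ by being careful that all the Young's-inequality splittings leave a coefficient strictly less than $1$ on $k^2\norml{u_h}{\Om}^2$ and strictly less than $2$ (in fact $1+\epsilon$) on $\norml{\na u_h}{\Om}^2$. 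No use of Lemma~\ref{lem3.2} is needed here; that lemma plays the complementary role of bounding $\norml{\na u_h}{\Om}$ and will be combined with this lemma afterwards to close the stability argument.
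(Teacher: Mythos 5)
Your proposal is correct and follows essentially the same route as the paper: test with $v_h=u_h$, take real and imaginary parts, use the raw imaginary-part bound (with $\norml{f}{\Om}\norml{u_h}{\Om}$ still on the right) to absorb the boundary term in the real-part estimate and obtain \eqref{e3.1}, then substitute \eqref{e3.1} back into the imaginary-part estimate to conclude \eqref{e3.2}. The circularity you flag is resolved in the paper exactly as you describe, by cycling through the unabsorbed forms in that order.
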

\begin{proof}
Taking $v_h=u_h$ in \eqref{eicg} yields
\begin{equation}\label{euh}
a_h(u_h,u_h) -k^2 \norml{u_h}{\Om}^2 + \i k \norml{u_h}{\Ga}^2
=(f, u_h)+\pd{g, u_h}.
\end{equation}
Therefore, by taking real part and imaginary part of the above equation
 we get 
\begin{align}\label{elem3.1a}
&k^2 \norml{u_h}{\Om}^2-\norml{\na u_h}{\Om}^2 \leq \abs{(f,u_h)+\pd{g, u_h} },\\
&\sum_{e\in\cE_h^I}\ga_e h_e\norml{\jm{\frac{\pa u_h}{\pa n_e}}}{e}^2+k\norml{u_h}{\Ga}^2\leq \bigl|(f,u_h)+\pd{g, u_h} \bigr|. \label{elem3.1b} 
\end{align}
From \eqref{elem3.1b},
\begin{align*}
\sum_{e\in\cE_h^I}\ga_e h_e\norml{\jm{\frac{\pa u_h}{\pa n_e}}}{e}^2+k\norml{u_h}{\Ga}^2\leq \norml{f}{\Om}\norml{u_h}{\Om}+\frac{1}{2k}\norml{g}{\Ga}^2+\frac{k}{2}\norml{u_h}{\Ga}^2
\end{align*}
which implies
\begin{equation}\label{elem3.1c}
\sum_{e\in\cE_h^I}\ga_e h_e\norml{\jm{\frac{\pa u_h}{\pa n_e}}}{e}^2+\frac{k}{2}\norml{u_h}{\Ga}^2\leq \norml{f}{\Om}\norml{u_h}{\Om}+\frac{1}{2k}\norml{g}{\Ga}^2.
\end{equation}
On the other hand, from \eqref{elem3.1a},
\begin{align*}
k^2 \norml{u_h}{\Om}^2\le \norml{\na u_h}{\Om}^2 +  \norml{f}{\Om}\norml{u_h}{\Om}+\frac{1}{2k}\norml{g}{\Ga}^2+\frac{k}{2}\norml{u_h}{\Ga}^2.
\end{align*}
By combining the above two estimates, we conclude that
\begin{align*}
k^2 \norml{u_h}{\Om}^2\le &\norml{\na u_h}{\Om}^2 +  2\norml{f}{\Om}\norml{u_h}{\Om}+\frac{1}{k}\norml{g}{\Ga}^2\\
\le& \norml{\na u_h}{\Om}^2 +  \frac{2}{k^2}\norml{f}{\Om}^2+\frac{k^2}{2}\norml{u_h}{\Om}+\frac{1}{k}\norml{g}{\Ga}^2
\end{align*}
which implies that \eqref{e3.1} holds. 

Plugging  \eqref{e3.1} into the right hand side of \eqref{elem3.1c} yields
\begin{align*}
&\sum_{e\in\cE_h^I}\ga_e h_e\norml{\jm{\frac{\pa u_h}{\pa n_e}}}{e}^2+\frac{k}{2}\norml{u_h}{\Ga}^2\\
&\leq \frac{C}{k}\norml{f}{\Om}\Big(\norml{\na u_h}{\Om}+\frac{1}{k}\norml{f}{\Om}+\frac{1}{k^{1/2}}\norml{g}{\Ga}\Big)+\frac{1}{2k}\norml{g}{\Ga}^2\\
&\leq \frac{C}{k}\norml{f}{\Om}\norml{\na u_h}{\Om}+\frac{C}{k^2}\norml{f}{\Om}^2+\frac{C}{k}\norml{g}{\Ga}^2.
\end{align*}
That is, \eqref{e3.2} holds. This completes the proof of the lemma.
\end{proof}

By combining Lemma~\ref{lem3.2} and Lemma~\ref{lem3.1} we may derive the following stability estimates for the CIP-FEMs.
\begin{theorem}\label{tsta}
The  solution $u_h\in V_h$  to the scheme \eqref{eicg} satisfies the following stability estimates.
\begin{align}\label{e3.3}
&\norml{\na u_h}{\Om}^2+k^2 \norml{u_h}{\Om}^2\lesssim\csta^2\norml{f}{\Om}^2+\csta\norml{g}{\Ga}^2,\\
&\sum_{e\in\cE_h^I}\ga_e h_e\norml{\jm{\frac{\pa u_h}{\pa n_e}}}{e}^2+k\norml{u_h}{\Ga}^2\label{e3.4}\lesssim \frac{\csta}{k}\norml{f}{\Om}^2+\frac{1}{k}\norml{g}{\Ga}^2.
\end{align}
Here
\[\csta:=\frac1k+\frac{1}{k^2 h}+\frac{1}{ k^3 h^2\ga}.\]
\end{theorem}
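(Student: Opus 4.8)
The plan is to combine the two lemmas of this section in a straightforward but slightly delicate algebraic manipulation. The key obstacle is the presence of the $\norml{\na u_h}{\Om}^2$ term on the right-hand side of Lemma~\ref{lem3.2}: we first need a bound on $\norml{\na u_h}{\Om}^2$ that does \emph{not} involve itself, and only Lemma~\ref{lem3.1} together with a choice of $\ep$ in Lemma~\ref{lem3.2} lets us absorb it. So the first step is to choose $\ep$ small (independent of $k$, $h$, $\ga$) in Lemma~\ref{lem3.2} and substitute \eqref{e3.1} of Lemma~\ref{lem3.1} for the factor $k^2\norml{u_h}{\Om}^2$, and substitute \eqref{e3.2} for the jump-sum term. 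This produces an inequality of the shape
\begin{equation*}
\norml{\na u_h}{\Om}^2\le \tfrac12\norml{\na u_h}{\Om}^2 + (\text{lower-order multiples of }\norml{\na u_h}{\Om}) + (\text{data terms}),
\end{equation*}
where the coefficients multiplying the data are exactly powers of the quantities $\frac1k$, $\frac1{k^2h}$, $\frac1{k^3h^2\ga}$ that appear in $\csta$; here one has to be careful that the substitution of \eqref{e3.2} into the $\frac{c_\ep}{k^2h^2\ga}$-term produces a $\norml{f}{\Om}\norml{\na u_h}{\Om}/k$ contribution with coefficient $\frac{1}{k^3h^2\ga}$, which must be split by Young's inequality.

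Second, I would absorb the $\tfrac12\norml{\na u_h}{\Om}^2$ and all the terms linear in $\norml{\na u_h}{\Om}$ into the left side using Young's inequality ($ab\le \tfrac14 a^2 + b^2$), collecting the resulting data-coefficients. Tracking these coefficients carefully, every term that multiplies $\norml{f}{\Om}^2$ is of the form (constant)$\times$(one of $\frac1{k^2}$, $\frac1{k^4h^2}$, $\frac1{k^6h^4\ga^2}$, and cross terms like $\frac1{k^4h^2\ga}$, $\frac1{k^5h^2\ga}$), all of which are $\lesssim \csta^2$ since $\csta\ge \frac1k$, $\csta\ge\frac1{k^2h}$, $\csta\ge\frac1{k^3h^2\ga}$ and cross products of these three are dominated by $\csta^2$ (using $k\gtrsim1$, $0<\ga\lesssim1$ to handle the mismatch of $\ga$-powers, e.g. $\frac1{k^4h^2\ga}\le \frac1{k^3h^2\ga}\cdot\frac1k\le\csta^2$ when $k\gtrsim1$). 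Likewise every coefficient of $\norml{g}{\Ga}^2$ is a constant times one of $\frac1k$, $\frac1{k^3h^2}$, $\frac1{k^5h^4\ga^2}$, etc., each $\lesssim \csta$ by the same reasoning (noting $\csta\cdot\frac1k\le\csta$ up to the convention, or more carefully that each such term is $\le\csta\cdot\max(\dots)$ and bounded by a constant multiple of $\csta$). This yields \eqref{e3.3} for the $\norml{\na u_h}{\Om}^2$ part.

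Third, I would recover the $k^2\norml{u_h}{\Om}^2$ bound in \eqref{e3.3} simply by feeding the just-established bound on $\norml{\na u_h}{\Om}^2$ back into \eqref{e3.1}; since $\frac1{k^2}\lesssim\csta^2$ and $\frac1k\lesssim\csta$, the data terms already present in \eqref{e3.1} are absorbed into the stated right-hand side. Finally, \eqref{e3.4} follows directly from \eqref{e3.2} of Lemma~\ref{lem3.1}: bound $\norml{f}{\Om}\norml{\na u_h}{\Om}/k$ by $\frac{\csta}{k}\norml{f}{\Om}^2 + \frac1{\csta k}\norml{\na u_h}{\Om}^2$ (Young), then use \eqref{e3.3} on $\norml{\na u_h}{\Om}^2$; the factor $\frac1{\csta k}\cdot\csta^2 = \frac{\csta}{k}$ matches the $\norml{f}{\Om}^2$-coefficient and $\frac1{\csta k}\cdot\csta = \frac1k$ matches the $\norml{g}{\Ga}^2$-coefficient, so the remaining terms in \eqref{e3.2} are already of the desired form. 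The main thing to get right throughout is the bookkeeping of which products of $\frac1k$, $\frac1{k^2h}$, $\frac1{k^3h^2\ga}$ arise and verifying each is controlled by $\csta^2$ or $\csta$ as appropriate; the analytic content is entirely contained in the two preceding lemmas.
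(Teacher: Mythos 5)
Your proposal is correct and follows essentially the same route as the paper: take $\ep$ small in Lemma~\ref{lem3.2}, substitute \eqref{e3.1} for the $k^2\norml{u_h}{\Om}^2$ term and \eqref{e3.2} for the boundary and jump terms, split the resulting $\frac1k\norml{f}{\Om}\norml{\na u_h}{\Om}$ contribution by Young's inequality, absorb, and then recover \eqref{e3.4} and the $L^2$ part from Lemma~\ref{lem3.1}. The coefficient bookkeeping you describe matches the paper's, where the data coefficients collect exactly into $\big(1+\frac{1}{kh}+\frac{1}{k^2h^2\ga}\big)^2\frac{1}{k^2}=\csta^2$ and $\big(1+\frac{1}{kh}+\frac{1}{k^2h^2\ga}\big)\frac{1}{k}=\csta$.
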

\begin{proof}
By taking $\ep=\dfrac13$ in Lemma~\ref{lem3.2} and applying Lemma~\ref{lem3.1},
\begin{align*}
\norml{\na u_h}{\Om}^2\le &\frac13 k^2\norml{u_h}{\Om}^2+ \frac{C}{k h}\,k\norml{u_h}{\Ga}^2+\frac{C}{k^2 h^2\ga}\sum_{e\in\cE_h^I}\ga_e h_e\norml{\jm{\frac{\pa u_h}{\pa n_e}}}{e}^2\\
\le& \frac13\Big(2\norml{\na u_h}{\Om}^2 +\frac{C}{k^2}\norml{f}{\Om}^2+\frac{C}{k}\norml{g}{\Ga}^2\Big) \\
&+C\Big(\frac{1}{k h}+\frac{1}{ k^2 h^2\ga}\Big)\Big(\frac1k\norml{f}{\Om}\norml{\na u_h}{\Om}+\frac{1}{k^2}\norml{f}{\Om}^2+\frac{1}{k}\norml{g}{\Ga}^2\Big)\\
\le& \frac23\norml{\na u_h}{\Om}^2+\frac{C}{k}\Big(\frac{1}{k h}+\frac{1}{ k^2 h^2\ga}\Big)\norml{f}{\Om}\norml{\na u_h}{\Om} \\
&+C\Big(1+\frac{1}{k h}+\frac{1}{ k^2 h^2\ga}\Big)\Big(\frac{1}{k^2}\norml{f}{\Om}^2+\frac{1}{k}\norml{g}{\Ga}^2\Big)\\
\le& \frac56\norml{\na u_h}{\Om}^2+ C\Big(1+\frac{1}{k h}+\frac{1}{ k^2 h^2\ga}\Big)^2\frac{1}{k^2}\norml{f}{\Om}^2\\
&+C\Big(1+\frac{1}{k h}+\frac{1}{ k^2 h^2\ga}\Big)\frac{1}{k}\norml{g}{\Ga}^2.
\end{align*}
Therefore,
\begin{align*}
\norml{\na u_h}{\Om}^2
\lesssim \csta^2\norml{f}{\Om}^2+ \csta\norml{g}{\Ga}^2.
\end{align*}
Then the proof of the theorem follows by combining the above estimate and Lemma~\ref{lem3.1}.
\end{proof}
\begin{corollary}\label{cor}
The CIP-FEM \eqref{eicg} attains a unique solution for any $k>0$, $h>0$ and $\ga>0$.
\end{corollary}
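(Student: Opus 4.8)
The plan is to deduce uniqueness of the CIP-FEM solution directly from the stability estimate in Theorem~\ref{tsta}, using the finite-dimensionality of $V_h$. Since \eqref{eicg} is a square linear system on the finite-dimensional space $V_h$, existence and uniqueness are equivalent, so it suffices to prove uniqueness, i.e.\ that the only solution with $f=0$ and $g=0$ is $u_h=0$.

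First I would set $f=0$ and $g=0$ in \eqref{eicg} and observe that the corresponding solution $u_h$ then satisfies the hypotheses of Theorem~\ref{tsta}. Applying \eqref{e3.3} with the right-hand side equal to zero gives
\[
\norml{\na u_h}{\Om}^2 + k^2\norml{u_h}{\Om}^2 \le 0,
\]
hence $\na u_h = 0$ on $\Om$ and $u_h = 0$ in $L^2(\Om)$, so $u_h\equiv 0$. (Alternatively one could invoke \eqref{e3.4} to see the jump seminorm and the boundary norm also vanish, but \eqref{e3.3} already forces $u_h=0$ on its own.) This shows the homogeneous system has only the trivial solution.

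Finally I would invoke the standard linear-algebra fact: the map $v_h\mapsto a_h(v_h,\cdot)-k^2(v_h,\cdot)+\i k\pd{v_h,\cdot}$ is a linear endomorphism of $V_h$ (identifying $V_h$ with its dual via the basis), and an injective endomorphism of a finite-dimensional space is bijective. Therefore \eqref{eicg} has a unique solution $u_h\in V_h$ for every right-hand side, in particular for the data $(f,g)$ of the problem, and for all $k>0$, $h>0$, $\ga>0$. There is essentially no obstacle here; the only point to be careful about is that Theorem~\ref{tsta} is stated for \emph{the} solution $u_h$ of \eqref{eicg}, so one should phrase the argument as ``any solution of the homogeneous problem must vanish,'' which is exactly what the proof of Theorem~\ref{tsta} delivers since that proof only used \eqref{eicg} with $v_h=u_h$ together with the integration-by-parts bound of Lemma~\ref{lem3.2}, both of which are valid for any solution.
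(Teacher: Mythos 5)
Your proposal is correct and is exactly the argument the paper intends: the corollary is stated as an immediate consequence of Theorem~\ref{tsta}, since the stability estimate \eqref{e3.3} applied with $f=0$, $g=0$ forces any solution of the homogeneous square linear system to vanish, and injectivity on the finite-dimensional space $V_h$ gives existence. Your added remark that the proof of Theorem~\ref{tsta} only uses \eqref{eicg} with $v_h=u_h$ together with Lemma~\ref{lem3.2}, and hence applies to any solution of the homogeneous problem, is a correct and worthwhile clarification of the step the paper leaves implicit.
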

\begin{remark} (a) For the general case when the penalty parameters or the meshes may be nonuniform, Theorem~\ref{tsta} and Corollary~\ref{cor} still hold with $\ga$ and $h$ replaced by $\gal=\min_{e\in\cE_h^I}\ga_e$ and $\hl=\min_{K\in\cT_h}{h_K}$, respectively. The proof is similar and is omitted. 

(b) If $\ga\gtrsim \dfrac{1}{k^3h^2}$ then $\csta\lesssim 1$ which implies the following stability estimates for the CIP-FEM:
\begin{align*}
\norm{u_h}_{1,h}\lesssim\norml{f}{\Om}+\norml{g}{\Ga}, \quad \norml{u_h}{\Om}\lesssim\frac{1}{k}\big(\norml{f}{\Om}+\norml{g}{\Ga}\big).
\end{align*}
These estimates are of the same order as those for the Helmholtz problem \eqref{e1.1}--\eqref{e1.2} (cf. Theorem~\ref{tstau}). But we do not suggest to choose $\ga$ as above when $k^3h^2$ is small, since a large $\ga$ may cause a large error of the discrete solution $u_h$ (cf. Theorem~\ref{tmain} below).

(c) The stability estimates in Theorem~\ref{tsta} will be improved in Section~\ref{sec-SEII} when $k^3h^2$ is small enough. Note that if $k^3h^2\gtrsim 1$ and $\ga\lesssim 1$, then $\csta\lesssim\dfrac{1}{\ga}$ and hence 
\begin{align*}
\norm{u_h}_{1,h}\lesssim\frac{1}{\ga}\big(\norml{f}{\Om}+\norml{g}{\Ga}\big), \quad \norml{u_h}{\Om}\lesssim\frac{1}{\ga k}\big(\norml{f}{\Om}+\norml{g}{\Ga}\big).
\end{align*}
\end{remark}
\section{Error estimates}\label{sec-err}
In this section we first introduce an elliptic projection of the solution $u$ to the Helmholtz problem \eqref{e1.1}--\eqref{e1.2} and estimate the error between them. Then we estimate the error between the elliptic projection and the CIP finite element solution $u_h$ by using the stability estimates in the previous section. In what follows, we assume that the domain $\Om$ is a convex polygon/polyhedron. Then $u\in H^2(\Om)$ (cf. \cite{g85}) and Theorem~\ref{tstau} implies that
\begin{equation}\label{estau2}
\norm{u}_{H^2(\Om)}\lesssim k\,M(f,g),
\end{equation}
where $M(f,g)=\norml{f}{\Om} + \norml{g}{\Ga}.$

\subsection{Elliptic projection and its error estimates}
For any $w\in E$, we define its
elliptic projection $\tilde{w}_h\in V_h$ by
\begin{equation}\label{e4.3}
a_h(\tilde{w}_h,v_h)+\i k\pd{\tilde{w}_h,v_h}  =a_h(w,v_h)+\i k\pd{w,v_h} 
\qquad\forall v_h\in V_h.
\end{equation}
In other words, $\tilde{w}_h$ is an CIP finite element approximation to
the solution $w$ of the following (complex-valued) Poisson problem:
\begin{alignat*}{2}
-\De w &= F &&\qquad \mbox{in } \Om,\\
\frac{\p w}{\p n} +\i k w &=\psi &&\qquad \mbox{on }\Gamma,
\end{alignat*}
for some given functions $F$ and $\psi$ which are determined by $w$.

The following lemma gives the continuity and coercivity of the sesquilinear form $a_h(\cdot,\cdot)$ whose proof is obvious and is omitted. 
\begin{lemma}\label{lem4.1}
For any $v, w\in E$, 
\begin{equation}
\abs{a_h(v,w)}, \;\abs{a_h(w,v)}\le \norm{v}_{1,h} \norm{w}_{1,h}, \label{e4.1}
\end{equation}
\begin{equation}
\re a_h(v,v)+\im a_h(v,v)= \norm{v}_{1,h}^2. \label{e4.2}
\end{equation}
\end{lemma}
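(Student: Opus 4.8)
The plan is to verify both statements by direct computation, exploiting the fact that $a_h$ splits cleanly as $a_h(u,v)=(\na u,\na v)+J(u,v)$ with $J$ purely imaginary, and that the semi-norm $\norm{\cdot}_{1,h}$ in \eqref{e2.5} is assembled from exactly these two pieces. For the coercivity identity \eqref{e4.2} I would put $w=v$: the gradient term $(\na v,\na v)=\norml{\na v}{\Om}^2$ is real and nonnegative, while by \eqref{eJ} each summand of $J(v,v)$ equals $\i\ga_e h_e\norml{\jm{\frac{\pa v}{\pa n_e}}}{e}^2$, i.e. $\i$ times a nonnegative real number. Hence $\re a_h(v,v)=\norml{\na v}{\Om}^2$ and $\im a_h(v,v)=\sum_{e\in\cE_h^I}\ga_e h_e\norml{\jm{\frac{\pa v}{\pa n_e}}}{e}^2$, and adding them reproduces $\norm{v}_{1,h}^2$ by definition \eqref{e2.5}.

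For the continuity bounds \eqref{e4.1} I would apply the Cauchy--Schwarz inequality twice. First, $\abs{(\na v,\na w)}\le\norml{\na v}{\Om}\norml{\na w}{\Om}$, and edge/face-wise $\abs{\pdjj{\frac{\pa v}{\pa n_e}}{\frac{\pa w}{\pa n_e}}}\le\norml{\jm{\frac{\pa v}{\pa n_e}}}{e}\norml{\jm{\frac{\pa w}{\pa n_e}}}{e}$, so, writing $\ga_e h_e=(\ga_e h_e)^{1/2}(\ga_e h_e)^{1/2}$ and using the discrete Cauchy--Schwarz inequality on the sum over $\cE_h^I$,
\[
\abs{J(v,w)}\le\Big(\sum_{e\in\cE_h^I}\ga_e h_e\norml{\jm{\tfrac{\pa v}{\pa n_e}}}{e}^2\Big)^{1/2}\Big(\sum_{e\in\cE_h^I}\ga_e h_e\norml{\jm{\tfrac{\pa w}{\pa n_e}}}{e}^2\Big)^{1/2}.
\]
Combining this with the gradient estimate through the elementary bound $ab+cd\le(a^2+c^2)^{1/2}(b^2+d^2)^{1/2}$ gives $\abs{a_h(v,w)}\le\norm{v}_{1,h}\norm{w}_{1,h}$. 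The bound on $\abs{a_h(w,v)}$ follows by the identical chain of inequalities, since each step is symmetric in $v$ and $w$ and indifferent to which argument slot of the underlying inner products they occupy.

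The only thing demanding attention is the bookkeeping of the weights $\ga_e h_e$, so that the two factors coming out of the edge/face-wise Cauchy--Schwarz step recombine precisely into the penalty parts of $\norm{v}_{1,h}$ and $\norm{w}_{1,h}$; there is no real obstacle, which is exactly why \eqref{e4.1}--\eqref{e4.2} are stated in the text without proof.
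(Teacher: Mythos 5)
Your proof is correct and is exactly the direct verification the paper has in mind when it declares the lemma ``obvious'' and omits the proof: the identity \eqref{e4.2} follows from the fact that $(\na v,\na v)$ is real and $J(v,v)$ is $\i$ times a nonnegative real, and \eqref{e4.1} follows from Cauchy--Schwarz applied termwise, over the edge/face sum, and finally to the two-term sum. Nothing is missing.
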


Let $u$ be the solution of problem \eqref{e1.1}--\eqref{e1.2}
and $\tuh$ be its elliptic projection defined as above. Then \eqref{e4.3} immediately implies the following
Galerkin orthogonality:
\begin{equation}\label{e4.4}
a_h(u-\tuh,v_h)+\i k\pd{u-\tuh,v_h}  =0 \qquad\forall v_h\in V_h.
\end{equation}

\begin{lemma}\label{lem4.2}
There hold the
following estimates:
\begin{align}
\norm{u-\tuh}_{1,h} 
&\lesssim \big(1+\ga+kh\big)^{1/2} kh M(f,g), \label{e4.2a}\\
\norml{u-\tuh}{\Om} &\lesssim \big(1+\ga+kh\big) kh^2 M(f,g).  \label{e4.2b}
\end{align}
\end{lemma}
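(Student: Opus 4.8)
The plan is to prove the two estimates in Lemma~\ref{lem4.2} by the standard ``Céa-type'' argument adapted to the sesquilinear form $a_h(\cdot,\cdot)$, combined with a duality (Aubin--Nitsche) argument for the $L^2$ bound. First I would set $e_h:=u-\tuh$ and, for an arbitrary $v_h\in V_h$, write $e_h=(u-v_h)+(v_h-\tuh)$ with $w_h:=v_h-\tuh\in V_h$. Using the coercivity identity \eqref{e4.2} and then the Galerkin orthogonality \eqref{e4.4} (which gives $a_h(e_h,w_h)+\i k\pd{e_h,w_h}=0$, hence $a_h(e_h,w_h)=-\i k\pd{e_h,w_h}$ so that its real part plus imaginary part is $-k\re\pd{e_h,w_h}-0$... more carefully one should take real and imaginary parts of $a_h(e_h,w_h)+\i k\pd{e_h,w_h}=0$), I would estimate
\begin{align*}
\norm{w_h}_{1,h}^2&=\re a_h(w_h,w_h)+\im a_h(w_h,w_h)\\
&=\re a_h(v_h-u,w_h)+\im a_h(v_h-u,w_h)+\big(\re a_h(e_h,w_h)+\im a_h(e_h,w_h)\big),
\end{align*}
and the last bracket is controlled via \eqref{e4.4} by $k\norml{e_h}{\Ga}\norml{w_h}{\Ga}$ (after taking real/imaginary parts), while the first two terms are bounded via the continuity \eqref{e4.1} by $\norm{u-v_h}_{1,h}\norm{w_h}_{1,h}$. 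A trace inequality bounds $\norml{w_h}{\Ga}$ by $h^{-1/2}\norm{w_h}_{1,h}$; the boundary term $k\norml{e_h}{\Ga}^2$ needs separate control, obtained by choosing $v_h=I_h u$ the nodal interpolant and using the trace estimate $\norml{u-I_hu}{\Ga}\lesssim h^{3/2}|u|_{H^2(\Om)}$ together with a bound on $\norml{w_h}{\Ga}$ that is absorbed. Then the triangle inequality $\norm{e_h}_{1,h}\le\norm{u-v_h}_{1,h}+\norm{w_h}_{1,h}$ gives a quasi-optimality estimate $\norm{u-\tuh}_{1,h}\lesssim \inf_{v_h}\norm{u-v_h}_{1,h}+\text{(boundary contribution)}$.

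Next I would estimate $\norm{u-I_hu}_{1,h}$. The gradient part $\norml{\na(u-I_hu)}{\Om}\lesssim h|u|_{H^2(\Om)}$ is classical. For the jump part, since $u\in H^2(\Om)$ one has $\jm{\pa u/\pa n_e}=0$, so $\jm{\pa(u-I_hu)/\pa n_e}=-\jm{\pa I_hu/\pa n_e}$, and each term $\ga_e h_e\norml{\jm{\pa I_hu/\pa n_e}}{e}^2$ is bounded, via trace and inverse inequalities, by $\ga h\cdot h^{-1}\norml{\na(u-I_hu)}{K_e}^2\lesssim \ga h^2|u|_{H^2(K_e)}^2$ — alternatively one writes $\jm{\pa I_hu/\pa n_e}=\jm{\pa(I_hu-u)/\pa n_e}$ and uses the local trace bound $\norml{\pa(I_hu-u)/\pa n_e}{e}\lesssim h^{-1/2}|u-I_hu|_{H^1(K_e)}+h^{1/2}|u|_{H^2(K_e)}\lesssim h^{1/2}|u|_{H^2(K_e)}$. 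Summing over $e$ yields $\norm{u-I_hu}_{1,h}\lesssim (1+\ga)^{1/2} h|u|_{H^2(\Om)}$, and the boundary term contributes an extra $kh$ inside the parenthesis (from $k\norml{u-I_hu}{\Ga}^2\lesssim k h^{3}|u|_{H^2(\Om)}^2$ balanced against $h|u|_{H^2}$, i.e. $k h$ relative to $(h|u|_{H^2})^2$). Invoking \eqref{estau2}, $|u|_{H^2(\Om)}\lesssim kM(f,g)$, gives \eqref{e4.2a}.

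For the $L^2$ estimate \eqref{e4.2b} I would use the Aubin--Nitsche duality: let $z\in H^2(\Om)$ solve the adjoint Poisson--Robin problem $-\De z-\i k z \text{ (or with the conjugated sign) } = e_h$ in $\Om$ with $\pa z/\pa n - \i k z = 0$ on $\Ga$ (the formal adjoint of $a_h(\cdot,\cdot)+\i k\pd{\cdot,\cdot}$), so that $\norm{z}_{H^2(\Om)}\lesssim \norml{e_h}{\Om}$ (elliptic regularity on the convex domain, no $k$-dependence needed here beyond a harmless factor since this is a coercive problem). Then $\norml{e_h}{\Om}^2=a_h(e_h,z)+\i k\pd{e_h,z}$ (note $J(e_h,z)$... here one must be careful: $z\in H^2(\Om)$ so $J(w,z)=0$ only if the second slot is smooth — indeed $J(e_h,z)=\i\ga_e h_e\sum\pdjj{\cdot}{\pa z/\pa n_e}$ and $\jm{\pa z/\pa n_e}=0$, so $J(e_h,z)=0$; hence $a_h(e_h,z)=(\na e_h,\na z)$). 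Using Galerkin orthogonality \eqref{e4.4} to subtract $I_h z$, $\norml{e_h}{\Om}^2=a_h(e_h,z-I_hz)+\i k\pd{e_h,z-I_hz}\le \norm{e_h}_{1,h}\norm{z-I_hz}_{1,h}+k\norml{e_h}{\Ga}\norml{z-I_hz}{\Ga}$, and then $\norm{z-I_hz}_{1,h}\lesssim (1+\ga)^{1/2}h\norm{z}_{H^2(\Om)}\lesssim (1+\ga)^{1/2}h\norml{e_h}{\Om}$, while the boundary factor contributes the remaining $kh$. Combining with \eqref{e4.2a} and dividing by $\norml{e_h}{\Om}$ yields \eqref{e4.2b}.

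The main obstacle I anticipate is the bookkeeping of the boundary terms $k\norml{\cdot}{\Ga}^2$: because $a_h$ is only coercive after adding its real and imaginary parts and the $\i k\pd{\cdot,\cdot}$ term, the boundary contributions do not simply disappear and must be tracked and absorbed at each step (in the quasi-optimality bound, in the interpolation estimate, and in the duality argument), which is exactly what produces the $(1+\ga+kh)$ factor rather than a clean $(1+\ga)$. A secondary technical point is verifying that $J$ vanishes whenever one slot is an $H^2(\Om)$ function, which is used both in the definition-consistency \eqref{2.6}/\eqref{e4.4} and crucially in the duality step; this is immediate from $\jm{\pa z/\pa n_e}=0$ for $z\in H^2(\Om)$ but should be stated.
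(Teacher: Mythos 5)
Your overall strategy --- coercivity of $a_h$ combined with the Galerkin orthogonality \eqref{e4.4} for the $H^1$ bound, interpolation estimates for the nodal interpolant (including the jump terms, which you treat correctly via $\jm{\pa u/\pa n_e}=0$), and Nitsche duality for the $L^2$ bound --- is the same as the paper's, which works directly with $\eta=u-\tuh$ rather than inserting an intermediate $v_h$. However, one step of your argument fails as written: the absorption of the boundary term. After invoking \eqref{e4.4} you are left with $k\abs{\pd{e_h,w_h}}\lesssim k\norml{u-v_h}{\Ga}\norml{w_h}{\Ga}+k\norml{w_h}{\Ga}^2$, and you propose to control $\norml{w_h}{\Ga}$ by $h^{-1/2}\norm{w_h}_{1,h}$. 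That inequality is false: $\norm{\cdot}_{1,h}$ is only a seminorm and carries no $L^2(\Om)$ or $L^2(\Ga)$ information, and even if it held, the resulting term $kh^{-1}\norm{w_h}_{1,h}^2$ could not be absorbed into $\norm{w_h}_{1,h}^2$ in the regime of interest. The correct mechanism --- and the one the paper uses --- is to keep the boundary term inside the coercive quantity: for the full form $b(v,w):=a_h(v,w)+\i k\pd{v,w}$ one has $\re b(v,v)+\im b(v,v)=\norm{v}_{1,h}^2+k\norml{v}{\Ga}^2$, so $k\norml{\eta}{\Ga}^2$ appears on the left with a favorable sign and is absorbed for free, giving $\norm{\eta}_{1,h}^2+k\norml{\eta}{\Ga}^2\lesssim\norm{u-\hat{u}_h}_{1,h}^2+k\norml{u-\hat{u}_h}{\Ga}^2$, which is \eqref{e4.6a}. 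The resulting control of $k\norml{\eta}{\Ga}^2$ is not a luxury: it is needed again in the duality step to handle $k\norml{u-\tuh}{\Ga}\norml{w-\hat{w}_h}{\Ga}$, so it cannot be discarded.

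A second, smaller point: your dual problem is not quite the right one. The auxiliary problem adjoint to $a_h(\cdot,\cdot)+\i k\pd{\cdot,\cdot}$ is the Poisson--Robin problem $-\De w=u-\tuh$ in $\Om$ with $\pa w/\pa n-\i kw=0$ on $\Ga$, with \emph{no} zeroth-order volume term; inserting $-\i kz$ into the PDE would contaminate the regularity bound $\abs{w}_{H^2(\Om)}\lesssim\norml{u-\tuh}{\Om}$ with $k$-dependent contributions (since then $\De z=-e_h-\i kz$) and would spoil the identity $\norml{u-\tuh}{\Om}^2=a_h(u-\tuh,w)+\i k\pd{u-\tuh,w}$. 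The rest of your duality computation, including the observation that $J(e_h,z)=0$ because $\jm{\pa z/\pa n_e}=0$ for $z\in H^2(\Om)$, matches the paper's proof.
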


\begin{proof}
Let $\hat{u}_h\in V_h$ be the $P_1$-conforming finite element interpolant of $u$ on
the mesh $\cT_h$.  Then $\hat{u}_h$ satisfies the following
estimates (cf. \cite{bs08,ciarlet78}):
\begin{align}\label{e4.6}
\norml{u-\hat{u}_h}{\Om} \lesssim h^2 \abs{u}_{H^2(\Om)}, \qquad \norml{\na(u-\hat{u}_h)}{\Om} \lesssim h \abs{u}_{H^2(\Om)},
\end{align}
which imply that
\begin{align}
\norml{u-\hat{u}_h}{\Ga} &\lesssim h^{\frac32} \abs{u}_{H^2(\Om)},
\label{e4.6c}\\
\norm{u-\hat{u}_h}_{1,h} &\lesssim\big(1+\ga\big)^{1/2} \,h\abs{u}_{H^2(\Om)}, \label{e4.6b}
\end{align}
where we have used the trace inequality $\norml{w}{\Ga}\lesssim\norml{w}{\Om}\norm{w}_{H^1(\Om)}$ to derive \eqref{e4.6c} and used the local trace inequality $\norml{w}{\pa K}\lesssim h_K^{-1/2}\norml{w}{K}+h_K^{1/2}\norml{\na w}{K}$ for any $K\in\cT_h$ to derive \eqref{e4.6b}.

Let $\eta:=u-\tuh$. From \eqref{e4.4},
\begin{equation}\label{e4.5}
a_h(\eta,\eta)+\i k\pd{\eta,\eta} 
=a_h(\eta,u-\hat{u}_h)+\i k\pd{\eta,u-\hat{u}_h} .
\end{equation}
 It follows from Lemma~\ref{lem4.1} and \eqref{e4.5} that
\begin{align*}
\norm{\eta}_{1,h}^2=& \re a_h(\eta,\eta)
+\im a_h(\eta,\eta)\\
=&\re\bigl(a_h(\eta,\eta)+\i k\pd{\eta,\eta}  \bigr)\\
& +\im\left(a_h(\eta,\eta)+\i k\pd{\eta,\eta}  \right)
-k\pd{\eta,\eta}  \\
=&\re\bigl(a_h(\eta,u-\hat{u}_h)+\i k\pd{\eta,u-\hat{u}_h}  \bigr)
-k\norml{\eta}{\Ga}^2 \\
& +
\im\left(a_h(\eta,u-\hat{u}_h)+\i k\pd{\eta,u-\hat{u}_h}  \right)\\
\le &C \Bigl(\norm{\eta}_{1,h}\norm{u-\hat{u}_h}_{1,h}
+k\norml{\eta}{\Ga}\norml{u-\hat{u}_h}{\Ga}\Bigr)-k\norml{\eta}{\Ga}^2.
\end{align*}
Therefore, it follows from \eqref{e4.6c}, \eqref{e4.6b}, and \eqref{estau2} that
\begin{align}\label{e4.6a}
\norm{\eta}_{1,h}^2 + k\norml{\eta}{\Ga}^2
\lesssim &\norm{u-\hat{u}_h}_{1,h}^2
+k\norml{u-\hat{u}_h}{\Ga}^2\\
\lesssim& \big(1+\ga+kh\big)k^2h^2M(f,g)^2.\nn
\end{align}
That is, \eqref{e4.2a} holds.

To show \eqref{e4.2b}, we use the Nitsche's duality argument (cf. \cite{bs08,ciarlet78}).
Consider the following auxiliary problem:
\begin{alignat}{2}\label{e4.7}
-\De w &=u-\tuh &&\qquad\text{in }\Om,\\
\frac{\pa w}{\pa n}-\i k w &=0 &&\qquad\text{on }\Ga. \nn
\end{alignat}
It can be shown that $w$ satisfies
\begin{equation}\label{e4.8}
\abs{w}_{H^2(\Om)}\lesssim \norml{u-\tuh}{\Om}.
\end{equation}
Let $\hat{w}_h$ be the $P_1$-conforming finite element interpolant of $w$ on
$\cT_h$. Testing the conjugated of \eqref{e4.7} by $u-\tuh$ and using \eqref{e4.4} we get
\begin{align*}
\norml{u-\tuh}{\Om}^2 &=-(u-\tuh, \De w)=a_h(u-\tuh,w)+\i k\pd{u-\tuh,w}  \\
&=a_h(u-\tuh,w-\hat{w}_h)+\i k\pd{u-\tuh,w-\hat{w}_h}  \\
&\lesssim \norm{u-\tuh}_{1,h}\norm{w-\hat{w}_h}_{1,h}
+k\norml{u-\tuh}{\Ga}\norml{w-\hat{w}_h}{\Ga} \\
&\lesssim \norm{\eta}_{1,h} \big(1+\ga\big)^{1/2} h \abs{w}_{H^2(\Om)}
+k\norml{\eta}{\Ga} h^{\frac32}\abs{w}_{H^2(\Om)},
\end{align*}
which together with \eqref{e4.6a} and \eqref{e4.8} gives \eqref{e4.2b}.
The proof is completed.
\end{proof}

\subsection{Error estimates for the CIP-FEMs}
In this subsection we shall derive error estimates for the scheme \eqref{eicg}.
This will be done by exploiting the linearity of the Helmholtz equation
and making use of the stability estimates derived in Theorem \ref{tsta} and
the projection error estimates established in Lemma \ref{lem4.2}.

Let $u$ and $u_h$ denote the solution of  \eqref{e1.1}--\eqref{e1.2} and that of \eqref{eicg},
respectively. Define the error function $e_h:=u-u_h$. Subtracting \eqref{eicg}
from \eqref{2.6} with $v=v_h\in V_h$ yields the following error equation:
\begin{equation}\label{error-eq}
a_h(e_h,v_h) -k^2 (e_h,v_h) +\i k \pd{e_h,v_h}  =0
\qquad \forall v_h\in V_h.
\end{equation}
Let $\tuh$ be the elliptic projection of $u$ as defined in the previous
subsection. Write $e_h=\eta-\xi$ with
\[
\qquad \eta:=u-\tuh, \qquad \xi:=u_h-\tuh.
\]
From \eqref{error-eq} and \eqref{e4.4} we get
\begin{align}\label{e4.14}
a_h(\xi,v_h) -k^2 (\xi,v_h) +\i k \pd{\xi,v_h} 
&= a_h(\eta,v_h) -k^2 (\eta,v_h) +\i k \pd{ \eta,v_h} \\
&=-k^2 (\eta,v_h) \qquad \forall v_h\in V_h. \nn
\end{align}
The above equation implies that $\xi\in V_h$ is the solution of the
scheme \eqref{eicg} with source terms $f=-k^2\eta$ and $g\equiv 0$.
Then an application of Theorem \ref{tsta} and Lemma \ref{lem4.2}
immediately gives the following lemma.

\begin{lemma}\label{lem4.3}
$\xi=u_h-\tuh$ satisfies the following estimate:
\begin{align}\label{est_xi}
&\norm{\xi}_{1,h}+k \norml{\xi}{\Om}
\lesssim \csta\big(1+\ga+kh\big) k^3h^2M(f,g),
\end{align}
where $\csta$ is defined in Theorem~\ref{tsta}.
\end{lemma}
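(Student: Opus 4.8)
The plan is to apply Theorem~\ref{tsta} directly to $\xi$, which by \eqref{e4.14} solves the CIP scheme \eqref{eicg} with data $f=-k^2\eta$ and $g\equiv 0$. First I would read off from \eqref{e3.3}--\eqref{e3.4} of Theorem~\ref{tsta} the bounds
\[
\norml{\na\xi}{\Om}^2+k^2\norml{\xi}{\Om}^2\lesssim \csta^2\,\norml{k^2\eta}{\Om}^2,\qquad
\sum_{e\in\cE_h^I}\ga_e h_e\norml{\jm{\tfrac{\pa\xi}{\pa n_e}}}{e}^2\lesssim \frac{\csta}{k}\,\norml{k^2\eta}{\Om}^2,
\]
since the $g$-contributions vanish. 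Combining the two and recalling $\norm{\xi}_{1,h}^2=\norml{\na\xi}{\Om}^2+\sum_{e}\ga_e h_e\norml{\jm{\pa\xi/\pa n_e}}{e}^2$, and noting $\csta/k\lesssim\csta^2$ (true since $\csta\gtrsim 1/k$), one obtains
\[
\norm{\xi}_{1,h}+k\norml{\xi}{\Om}\lesssim \csta\, k^2\norml{\eta}{\Om}.
\]

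Next I would substitute the projection error estimate \eqref{e4.2b} from Lemma~\ref{lem4.2}, namely $\norml{\eta}{\Om}=\norml{u-\tuh}{\Om}\lesssim(1+\ga+kh)\,kh^2\,M(f,g)$. Plugging this in gives
\[
\norm{\xi}_{1,h}+k\norml{\xi}{\Om}\lesssim \csta\,k^2\cdot(1+\ga+kh)\,kh^2\,M(f,g)=\csta\,(1+\ga+kh)\,k^3h^2\,M(f,g),
\]
which is exactly \eqref{est_xi}. So the proof is essentially a two-line assembly: apply the stability theorem with the special right-hand side, then insert the $L^2$ projection bound.

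There is no real obstacle here; the only points requiring a moment's care are (i) confirming that the hypotheses of Theorem~\ref{tsta} hold for the auxiliary problem solved by $\xi$ — they do, since that theorem is stated for \emph{any} $k>0$, $h>0$, $\ga>0$ and any data, and \eqref{e4.14} exhibits $\xi$ as such a solution; and (ii) checking the harmless bookkeeping that $\csta\ge C/k$ so that the weaker power of $\csta$ coming from the jump term in \eqref{e3.4} is absorbed into the $\csta$ appearing in the energy bound \eqref{e3.3}. Everything else is the triangle-type combination of the two displayed inequalities, and the use of the already-established Lemma~\ref{lem4.2}. I would write the proof in three short sentences invoking \eqref{e4.14}, Theorem~\ref{tsta}, and Lemma~\ref{lem4.2} in that order.
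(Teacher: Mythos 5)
Your proposal is correct and is exactly the argument the paper intends: it presents Lemma~\ref{lem4.3} as an immediate consequence of \eqref{e4.14} (which exhibits $\xi$ as a CIP solution with data $f=-k^2\eta$, $g\equiv 0$), Theorem~\ref{tsta}, and the $L^2$ bound \eqref{e4.2b} of Lemma~\ref{lem4.2}. Your extra remark that $\csta/k\lesssim\csta^2$ (since $\csta\ge 1/k$) correctly handles the only bookkeeping point in combining \eqref{e3.3} and \eqref{e3.4}.
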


We are ready to state our error estimate results for scheme \eqref{eicg},
which follows from Lemma \ref{lem4.2},  Lemma \ref{lem4.3} and an
application of the triangle inequality.

\begin{theorem}\label{tmain}
Let $u$ and $u_h$ denote the solutions of \eqref{e1.1}--\eqref{e1.2}
and \eqref{eicg}, respectively.  
Then there exist two positive constants $C_1$ and $C_2$ such that the following error estimates hold.
\begin{align}\label{e4.15}
\norm{u-u_h}_{1,h} &\le\big(1+\ga+kh\big)\big(C_1 kh+C_2\csta k^3h^2\big)M(f,g),  \\
\norml{u-u_h}{\Om} &\le \big(1+\ga+kh\big) \big(C_1 kh^2+C_2\csta k^2h^2\big)M(f,g),\label{e4.16}
\end{align}
where $\csta$ is defined in Theorem~\ref{tsta} and $M(f,g)=\norml{f}{\Om} + \norml{g}{\Ga}$.
\end{theorem}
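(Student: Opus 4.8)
The plan is to obtain the error estimates in Theorem~\ref{tmain} by the decomposition $e_h = u - u_h = \eta - \xi$, where $\eta = u - \tuh$ is the elliptic projection error and $\xi = u_h - \tuh$ is the discrete remainder, and then simply apply the triangle inequality in the relevant norms. For the $H^1$-type estimate \eqref{e4.15}, I would write $\norm{u-u_h}_{1,h} \le \norm{\eta}_{1,h} + \norm{\xi}_{1,h}$, bound the first summand by \eqref{e4.2a} of Lemma~\ref{lem4.2}, which gives $C_1(1+\ga+kh)^{1/2}kh\,M(f,g)$, and bound the second summand by \eqref{est_xi} of Lemma~\ref{lem4.3}, which gives $C_2\csta(1+\ga+kh)k^3h^2\,M(f,g)$. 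Since $(1+\ga+kh)^{1/2}\le (1+\ga+kh)$ and $(1+\ga+kh)\le(1+\ga+kh)$, both terms can be absorbed into the common prefactor $(1+\ga+kh)$, yielding \eqref{e4.15}.

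For the $L^2$-estimate \eqref{e4.16}, the argument is identical in structure: $\norml{u-u_h}{\Om} \le \norml{\eta}{\Om} + \norml{\xi}{\Om}$. The first term is controlled by \eqref{e4.2b} of Lemma~\ref{lem4.2}, giving $C_1(1+\ga+kh)kh^2\,M(f,g)$. The second term is controlled by noting that Lemma~\ref{lem4.3} also bounds $k\norml{\xi}{\Om}$, so $\norml{\xi}{\Om} \lesssim \csta(1+\ga+kh)k^2h^2\,M(f,g)$. Adding the two and again collecting the $(1+\ga+kh)$ prefactor yields \eqref{e4.16}.

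The substance of the theorem has in fact already been distilled into Lemma~\ref{lem4.2} and Lemma~\ref{lem4.3}, so the proof here is genuinely just bookkeeping; there is no real obstacle. The only point requiring a moment's care is the justification that $\xi$ solves the CIP-FEM scheme \eqref{eicg} with data $f = -k^2\eta$ and $g \equiv 0$ — this is the content of \eqref{e4.14}, obtained by subtracting the Galerkin orthogonality \eqref{e4.4} from the error equation \eqref{error-eq} — so that Theorem~\ref{tsta} may be invoked with $M(-k^2\eta,0) \lesssim k^2\norml{\eta}{\Om} \lesssim (1+\ga+kh)k^3h^2\,M(f,g)$, where the last step uses \eqref{e4.2b}. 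If anything, the mild subtlety is simply keeping track of the powers of $k$, $h$, $\ga$, and $\csta$ so that the final prefactors match the claimed form; but since the constituent lemmas are already stated with explicit dependence, this is routine and the proof is short.
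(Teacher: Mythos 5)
Your proposal is correct and is exactly the paper's argument: the paper states that Theorem~\ref{tmain} ``follows from Lemma~\ref{lem4.2}, Lemma~\ref{lem4.3} and an application of the triangle inequality,'' which is precisely the decomposition $u-u_h=\eta-\xi$ and bookkeeping you describe. Your tracking of the prefactors (absorbing $(1+\ga+kh)^{1/2}$ into $(1+\ga+kh)$, and reading off $\norml{\xi}{\Om}$ from the $k\norml{\xi}{\Om}$ term in \eqref{est_xi}) matches the stated bounds.
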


\begin{remark}\label{rtmain} (a) If $kh\lesssim 1, k^3h^2\gtrsim 1$, and $\ga\lesssim 1$,  then $\csta\lesssim \dfrac{1}{ k^3 h^2\ga}$ and we have the following error estimates for the CIP-FEM:
\[\norm{u-u_h}_{1,h} \lesssim\big(C_1 kh+\frac{C_2}{\ga}\big)M(f,g),  \quad
\norml{u-u_h}{\Om} \lesssim\big(C_1 kh^2+\frac{C_2}{k\ga}\big)M(f,g).\]
The pollution term in the above $H^1$ error estimate is $O(1)$ if $\ga\simeq 1$. By contrast the pollution term for the linear FEM when $ k^3h^2\gtrsim 1$ is expected to be of order $k^3h^2$ as that proved for the one dimensional case (cf. \cite{ib95a}). 

(b) The error estimates in Theorem~\ref{tmain} will be improved in the next section when $k^3h^2\le C_0$ for some constant $C_0$ independent of $k$, $h$, and the penalty parameters.
\end{remark}

\section{Stability-error iterative improvement}\label{sec-SEII}

In this section we improve the stability estimates in Theorem~\ref{tsta} and the error estimates in Theorem~\ref{tmain} under the condition that $k^3h^2$ is small enough, by using the trick of so called ``stability-error iterative improvement" developed in \cite{fw11}.

\begin{theorem}\label{tmain2}
Let $u$ and $u_h$ denote the solutions of \eqref{e1.1}--\eqref{e1.2} and \eqref{eicg},
respectively.  Assume that $\ga\lesssim 1$. Then there exists
a constant $C_0>0$, which is independent of $k$, $h$, and the penalty parameters,
such that if $k^3 h^2\le  C_0$, then the following stability and error
estimates hold:
\begin{align}\label{esta1}
\norm{u_h}_{1,h}&\lesssim  M(f,g),\\
\norml{u_h}{\Om}&\lesssim  \frac{1}{k} M(f,g), \label{esta1a}\\
\norm{u-u_h}_{1,h} &\lesssim\big(C_1 kh+C_2 k^3h^2\big)M(f,g), \label{eerr1} \\
\norml{u-u_h}{\Om} & \lesssim \big(C_1kh^2+C_2k^2h^2\big) M(f,g),
\label{eerr1a}
\end{align}
where $M(f,g)=\norml{f}{\Om} + \norml{g}{\Ga}$.
\end{theorem}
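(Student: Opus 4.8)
The plan is to run the ``stability-error iterative improvement'' argument of \cite{fw11}: start from the crude stability bound of Theorem~\ref{tsta} and bootstrap on the \emph{stability constant} itself, using the elliptic-projection splitting \eqref{e4.14}. First I would record the elementary consequences of the standing hypotheses $k\gtrsim 1$, $\ga\lesssim 1$, $k^3h^2\le C_0$: since $(kh)^2=k^3h^2/k\lesssim C_0$, one has $kh\lesssim 1$ and hence $1+\ga+kh\lesssim 1$. Thus Lemma~\ref{lem4.2} simplifies to $\norm{u-\tuh}_{1,h}\lesssim kh\,M(f,g)$ and $\norml{u-\tuh}{\Om}\lesssim kh^2M(f,g)$. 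I would also bound the projection itself: $\norm{\tuh}_{1,h}\le\norm{u}_{1,h}+\norm{u-\tuh}_{1,h}$, where $\norm{u}_{1,h}=\norml{\na u}{\Om}$ because $u\in H^2(\Om)$ has no interior jumps, so Theorem~\ref{tstau} (with $j=1$) and the simplified Lemma~\ref{lem4.2} give $\norm{\tuh}_{1,h}\le C_* M(f,g)$ for some absolute constant $C_*$.

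The heart of the argument is a fixed-point bootstrap. Suppose that, \emph{for every} choice of data, the CIP solution obeys $\norm{v_h}_{1,h}\le Q\big(\norml{F}{\Om}+\norml{G}{\Ga}\big)$ for some finite $Q$; Theorem~\ref{tsta} provides this with $Q=C\csta<\infty$ (and in fact $\csta\gtrsim 1$ under the present hypotheses). Given data $f,g$, decompose $u_h=\tuh+\xi$ as in \eqref{e4.14}, so that $\xi$ is the CIP solution with data $F=-k^2\eta$, $G\equiv 0$, $\eta:=u-\tuh$. Applying the assumed bound to $\xi$ and then the simplified Lemma~\ref{lem4.2},
\[
\norm{\xi}_{1,h}\le Q\,k^2\norml{\eta}{\Om}\lesssim Q\,k^3h^2\,M(f,g)\le C'Q\,C_0\,M(f,g).
\]
I would then \emph{fix} $C_0$ once and for all so that $C'C_0\le\tfrac12$, so that $\norm{\xi}_{1,h}\le\tfrac12 Q\,M(f,g)$ and hence $\norm{u_h}_{1,h}\le\norm{\tuh}_{1,h}+\norm{\xi}_{1,h}\le(C_*+\tfrac12 Q)M(f,g)$. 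Thus $Q\mapsto C_*+\tfrac12 Q$ sends valid stability constants to valid ones; iterating from $Q_0=C\csta$ yields $Q_n\to 2C_*$, and since $\norm{u_h}_{1,h}\le Q_nM(f,g)$ for every $n$ we obtain \eqref{esta1}. Estimate \eqref{esta1a} then follows directly from \eqref{e3.1} of Lemma~\ref{lem3.1}, since $k^2\norml{u_h}{\Om}^2\lesssim\norm{u_h}_{1,h}^2+M(f,g)^2\lesssim M(f,g)^2$.

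For the error bounds I would revisit $\xi$ with the improved stability now available: applying \eqref{esta1}--\eqref{esta1a} to the CIP problem solved by $\xi$ (data $-k^2\eta$, $0$) gives $\norm{\xi}_{1,h}\lesssim k^2\norml{\eta}{\Om}\lesssim k^3h^2M(f,g)$ and $\norml{\xi}{\Om}\lesssim\tfrac1k k^2\norml{\eta}{\Om}\lesssim k^2h^2M(f,g)$. Since $u-u_h=\eta-\xi$, the triangle inequality together with the simplified Lemma~\ref{lem4.2} gives $\norm{u-u_h}_{1,h}\lesssim kh\,M(f,g)+k^3h^2M(f,g)$ and $\norml{u-u_h}{\Om}\lesssim kh^2M(f,g)+k^2h^2M(f,g)$, which are exactly \eqref{eerr1} and \eqref{eerr1a} after naming the constants $C_1,C_2$.

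The step I expect to be the main obstacle is the bootstrap itself: one must be careful that the stability hypothesis is being invoked \emph{uniformly over all right-hand sides}, so that it legitimately applies to the auxiliary solution $\xi$ (whose data depends on $f,g$ through $\eta$), and that $C_0$ is chosen so the contraction factor is strictly below $1$ with a threshold independent of $k$, $h$, and $\ga$. Everything else is a routine recombination of Theorems~\ref{tstau} and \ref{tsta} with Lemmas~\ref{lem3.1} and \ref{lem4.2}.
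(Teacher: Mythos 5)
Your proposal is correct and follows essentially the same route as the paper: the ``stability-error iterative improvement'' starting from the crude bound $\norm{u_h}_{1,h}\lesssim\csta M(f,g)$ of Theorem~\ref{tsta}, applied uniformly in the data to the auxiliary solution $\xi$ of \eqref{e4.14} with data $-k^2\eta$, combined with Lemma~\ref{lem4.2} and the triangle inequality, and iterated with $C_0$ fixed so the contraction factor is below $1$. Your bookkeeping ($Q\mapsto C_*+\tfrac12Q$) is a minor rearrangement of the paper's recursion $\La_j=\tilde C(1+kh)+\tilde C k^3h^2\La_{j-1}$, and the remaining steps (deducing \eqref{esta1a} from \eqref{e3.1} and the error bounds from the improved stability) match the paper's proof.
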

\begin{proof} It suffices to prove \eqref{esta1}, since \eqref{esta1a} follows then from Lemma~\ref{lem3.1} (specifically, \eqref{e3.1}) and \eqref{eerr1}--\eqref{eerr1a} follow from the improved stability
estimates and the argument used in the proof of Theorem~\ref{tmain}. Suppose $\csta>1$, otherwise, \eqref{esta1} holds already (cf. Theorem~\ref{tsta}).

From Theorem~\ref{tsta} we have, for any $f\in L^2(\Om)$ and $g\in L^2(\Ga)$,
\begin{equation}\label{e5.1}
\norm{u_h}_{1,h}\lesssim\csta M(f,g).
\end{equation}
Suppose $k h\lesssim 1$. Then \eqref{e4.14} and Lemma~\ref{lem4.2} imply that
\begin{align*}
\norm{u_h-\tuh}_{1,h}\lesssim\csta k^2\norml{u-\tuh}{\Om}
\lesssim \csta k^3h^2M(f,g).
\end{align*}
Therefore from the triangle inequality and Lemma~\ref{lem4.2} we have
\begin{equation}\label{e5.2}
    \norm{u-u_h}_{1,h}\lesssim  \norm{u-\tuh}_{1,h}+\norm{\tuh-u_h}_{1,h} \lesssim\big(kh+\csta k^3h^2\big)M(f,g).
\end{equation}
Now it follows from the triangle inequality and Theorem~\ref{tstau} that
\begin{align}\label{e5.3}
\norm{u_h}_{1,h} &\le\norm{u}_{1,h}+\norm{u_h-u}_{1,h}
=\norml{\na u}{\Om}+\norm{u-u_h}_{1,h}\\
&\lesssim \Bigl(1+k\,h+ \csta k^3 h^2 \Bigr) M(f,g).\nn
\end{align}
Repeating the above process yields that there exists a constant $\tilde C$
independent of $k$, $h$, and the penalty parameters, and a sequence of
positive numbers $\La_j$ such that
\begin{align}\label{e5.4}
\norm{u_h}_{1,h}\le \La_jM(f,g),
\end{align}
with
\begin{align*}
\La_0\simeq \csta ,\quad \La_j
=\tilde C(1+k\,h) +  \tilde C\,k^3 h^2\,\La_{j-1},
\quad j=1,2,\cdots.
\end{align*}
A simple calculation yields that if  $\tilde C\,k^3 h^2<\ta$ for
some positive constant $\ta<1$ then
\[
\lim_{j\to\infty}\La_j=\frac{\tilde C(1+k\,h) }{1-\tilde C\,k^3 h^2},
\]
which implies \eqref{esta1}.
\end{proof}
\begin{remark} (a)
Note that the stability estimates in \eqref{esta1} and \eqref{esta1a} are of the same order as the PDE stability estimates given
in Theorem~\ref{tstau}.

(b) Note that the estimates in Theorem~\ref{tmain2} are uniform with respect to $0<\ga\lesssim 1$. In the next section, by passing to the limit $\ga\to 0+$ in the CIP-FEMs \eqref{eicg} and in \eqref{esta1}--\eqref{eerr1a}, we will give stability and error estimates for the FEMs.
\end{remark}


\section{Stability and error estimates for the linear finite element method}\label{sec-fem}
It is clear that both the bilinear form $a_h(\cdot,\cdot)$ and the CIP finite element solution $u_h$ to \eqref{eicg} depend on the penalty parameters $\ga_e$. In this section, we choose $\ga_e\equiv\ga$ and denote by $a_h^\ga(\cdot,\cdot):=a_h(\cdot,\cdot)$ and by $u_h^\ga:=u_h$. Obviously, if $\ga$ vanishes, the CIP-FEM \eqref{eicg} ``degenerates" to  the standard linear FEM: Find $\uhfem\in V_h$ such that 
\begin{equation}\label{efem}
(\na \uhfem,\na v_h) - k^2(\uhfem,v_h) +\i k \pd{ \uhfem,v_h}
=(f, v_h)+\pd{g, v_h}   \qquad\forall v_h\in V_h.
\end{equation}

Next we will derive stability and error estimates for the FEM by showing that $u_h^\ga$ converges as $\ga\to 0+$.
\begin{theorem}\label{tmain3}
 There exists
a constant $C_0>0$ independent of $k$ and $h$
such that if $k^3 h^2\le  C_0$, then \eqref{efem} attains a unique solution $\uhfem\in V_h$ which satisfies the following stability and error estimates:
\begin{align}\label{estafem}
\norml{\na \uhfem}{\Om}&\lesssim  M(f,g),\\
\norml{\uhfem}{\Om}&\lesssim  \frac{1}{k} M(f,g), \label{estafema}\\
\norml{\na (u-\uhfem)}{\Om} & \lesssim\big(C_1 kh+C_2 k^3h^2\big)M(f,g), \label{eerrfem} \\
\norml{u-\uhfem}{\Om} & \lesssim \big(C_1kh^2+C_2k^2h^2\big) M(f,g),
\label{eerrfema}
\end{align}
where $M(f,g)=\norml{f}{\Om} + \norml{g}{\Ga}$.
\end{theorem}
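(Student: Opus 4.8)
The plan is to obtain Theorem~\ref{tmain3} as the limit $\ga\to 0+$ of the corresponding CIP-FEM estimates in Theorem~\ref{tmain2}, exploiting that those estimates are \emph{uniform} in $0<\ga\lesssim 1$. First I would fix $k,h$ with $k^3h^2\le C_0$ (the same $C_0$ as in Theorem~\ref{tmain2}) and consider the family $\{u_h^\ga\}_{0<\ga\le 1}\subset V_h$. By Theorem~\ref{tmain2} (with $\ga_e\equiv\ga$), $\norm{u_h^\ga}_{1,h}\lesssim M(f,g)$ with constant independent of $\ga$; since $\norml{\na u_h^\ga}{\Om}\le\norm{u_h^\ga}_{1,h}$, the family is bounded in the finite-dimensional space $V_h$ (all norms on $V_h$ being equivalent for fixed $h$). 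Hence there is a sequence $\ga_n\to 0+$ with $u_h^{\ga_n}\to w_h$ in $V_h$ for some $w_h\in V_h$.

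Next I would pass to the limit in the scheme. For a fixed test function $v_h\in V_h$, the only $\ga$-dependent term in \eqref{eicg} is $J(u_h^\ga,v_h)=\i\ga\sum_{e\in\cE_h^I}h_e\pdjj{\p u_h^\ga/\p n_e}{\p v_h/\p n_e}$; since $\norm{u_h^\ga}_{1,h}$ is bounded uniformly, $\sum_{e\in\cE_h^I}\ga_e h_e\norml{\jm{\p u_h^\ga/\p n_e}}{e}^2\lesssim M(f,g)^2$ uniformly, so by Cauchy--Schwarz $\abs{J(u_h^\ga,v_h)}\lesssim \ga^{1/2}\norm{v_h}_{1,h}M(f,g)\to 0$ as $\ga\to 0+$ (here one uses $\sum_e\ga h_e\norml{\jm{\p v_h/\p n_e}}{e}^2\lesssim\ga\norm{v_h}_{H^1}^2$ by the local inverse/trace inequalities). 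All remaining terms of \eqref{eicg} converge by continuity of $w\mapsto(\na w,\na v_h)$, $(w,v_h)$, $\pd{w,v_h}$ on $V_h$. Therefore $w_h$ satisfies \eqref{efem}, i.e. $w_h$ is a FEM solution. For uniqueness: if $u_1,u_2$ both solve \eqref{efem}, their difference $\delta$ solves \eqref{efem} with $f=g=0$; but $\delta$ also solves \eqref{eicg} with $f=g=0$ for \emph{every} $\ga>0$ because $J(\delta,v_h)$ with $\ga=0$ is absent and adding it back does not change that $\delta$ is piecewise linear — more carefully, $\delta$ solving \eqref{efem} with zero data means $a_h^0(\delta,v_h)-k^2(\delta,v_h)+\i k\pd{\delta,v_h}=0$; this does not immediately give a CIP solution, so instead I would invoke the standard fact that a square linear system on the finite-dimensional $V_h$ has a unique solution iff the homogeneous problem has only the trivial solution, and derive $\delta=0$ directly by testing \eqref{efem} (homogeneous) with $v_h=\delta$: taking imaginary part gives $k\norml{\delta}{\Ga}=0$, and then the Schatz-type/duality argument underlying Theorem~\ref{tmain2} (valid for the FEM, which is the $\ga=0$ case, under $k^3h^2\le C_0$) forces $\delta\equiv 0$. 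Equivalently, and more cleanly, uniqueness for \eqref{efem} follows from the error estimate \eqref{eerrfem} applied with zero data once existence is known, or from observing the stability bound \eqref{estafem} passes to any FEM solution, hence the homogeneous FEM solution vanishes.

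Finally, the estimates: each of \eqref{esta1}--\eqref{eerr1a} holds for $u_h^{\ga_n}$ with a constant independent of $n$, so letting $n\to\infty$ and using $u_h^{\ga_n}\to w_h=\uhfem$ in $V_h$ (whence $\norml{\na\cdot}{\Om}$, $\norml{\cdot}{\Om}$, $\norml{\cdot}{\Ga}$ all converge) yields \eqref{estafem}--\eqref{eerrfema}; note $\norm{u_h^{\ga_n}}_{1,h}\ge\norml{\na u_h^{\ga_n}}{\Om}$ and the penalty part is nonnegative, so the $H^1$-seminorm bounds survive the limit, and likewise $\norm{u-u_h^{\ga_n}}_{1,h}\ge\norml{\na(u-u_h^{\ga_n})}{\Om}\to\norml{\na(u-\uhfem)}{\Om}$. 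I expect the main obstacle to be the bookkeeping around uniqueness and around ensuring the limit $w_h$ is independent of the subsequence: once uniqueness of the FEM solution is established (for which the homogeneous-problem argument above is the crux, and it is essentially the $\ga=0$ specialization of the stability analysis already carried out), every subsequential limit coincides, so in fact $u_h^\ga\to\uhfem$ as $\ga\to 0+$ along the whole family, and the estimates follow cleanly.
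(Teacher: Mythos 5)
Your overall strategy -- obtain the FEM results as the limit $\ga\to 0+$ of the uniform-in-$\ga$ estimates of Theorem~\ref{tmain2} -- is exactly the paper's, and your existence argument (extract a convergent subsequence from the bounded family $\{u_h^\ga\}$ in the finite-dimensional $V_h$, then pass to the limit in \eqref{eicg} using $\abs{J(u_h^\ga,v_h)}\lesssim \ga^{1/2}M(f,g)\norml{\na v_h}{\Om}\to 0$) is a legitimate alternative to the paper's mechanism, which instead shows $\{u_h^\ga\}$ is Cauchy: writing the difference $u_h^{\ga_1}-u_h^{\ga_2}$ as the solution of the CIP scheme with parameter $\ga_1$ and a right-hand side of size $O(\abs{\ga_1-\ga_2})$, Theorem~\ref{tmain2} gives $\norm{u_h^{\ga_1}-u_h^{\ga_2}}_{H^1(\Om)}\lesssim\abs{\ga_1-\ga_2}\,C(h)M(f,g)$. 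The passage to the limit in the estimates \eqref{esta1}--\eqref{eerr1a} is also fine.

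The genuine gap is in your uniqueness step. Your primary route is: for a homogeneous FEM solution $\delta$, take the imaginary part to get $\delta|_\Ga=0$ and then invoke ``the Schatz-type/duality argument underlying Theorem~\ref{tmain2}, valid for the FEM, which is the $\ga=0$ case.'' That last claim is precisely what is \emph{not} available: the stability-error iteration of Theorem~\ref{tmain2} must be seeded with the a priori bound of Theorem~\ref{tsta}, whose constant $\csta=\frac1k+\frac{1}{k^2h}+\frac{1}{k^3h^2\ga}$ blows up as $\ga\to 0+$, so the iteration cannot even start at $\ga=0$. (If a direct $\ga=0$ analysis under only $k^3h^2\le C_0$ were available, the entire limiting construction of Section~\ref{sec-fem} would be unnecessary.) Your fallback suggestions -- that \eqref{eerrfem} ``applied with zero data'' or that \eqref{estafem} ``passes to any FEM solution'' -- are circular as stated: those estimates are proved only for the particular limit $\uhfem$ of your subsequence, not for an arbitrary solution of \eqref{efem}. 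The paper closes this by showing that \emph{any} FEM solution $u_h^0$ satisfies $\norm{u_h^{\ga_1}-u_h^0}_{H^1(\Om)}\lesssim\ga_1\norml{f_h}{\Om}$ (the same perturbation identity with $\ga_2=0$), forcing $u_h^0=\lim_{\ga_1\to0+}u_h^{\ga_1}=\uhfem$; this simultaneously shows the limit is subsequence-independent. Your argument could also be repaired without the paper's device by completing the ``square linear system'' remark you gesture at: your existence proof works for every $f\in V_h$, $g=0$, and the map $f\mapsto((f,\phi_j))_j$ is onto $\mathbb{C}^{\dim V_h}$, so the FEM matrix is surjective, hence injective, hence the solution is unique -- but as written you take the direct injectivity route, and that route relies on an unavailable $\ga=0$ stability analysis.
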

\begin{proof}Let $C_0$ be the constant defined in Theorem~\ref{tmain2}. Suppose $k^3 h^2\le  C_0$ and $h$ is fixed. Note that the space $V_h$ is finite dimensional and hence any two norms on $V_h$ are equivalent. It is clear that, if $u_h^\ga$ converges to some function $\uhfem$ in $H^1(\Om)$ as $\ga\to 0+$, then the existence of a finite element solution to \eqref{efem} and the estimates \eqref{estafem}--\eqref{eerrfema} for the finite element solution follow by letting $\ga\to 0+$ in the CIP-FEM \eqref{eicg} and in Theorem~\ref{tmain2}. Next we prove the convergence of $u_h^\ga$ by using the Cauchy's convergence test.

By letting $\ga=\ga_1$, $\ga_2$ in \eqref{eicg}, respectively, and taking the difference, we get
\begin{align*}
a_h^{\ga_1}(u_h^{\ga_1},v_h)-a_h^{\ga_2}(u_h^{\ga_2},v_h)-k^2(u_h^{\ga_1}-u_h^{\ga_2},v_h) +\i k \pd{u_h^{\ga_1}-u_h^{\ga_2},v_h}  =0, \quad\forall v_h\in V_h.
\end{align*}
Recall that 
\[a_h^{\ga}(\vp,v)=(\na\vp,\na v)+  \i\ga \sum_{e\in\cE_h^{I}}h_e \pdjj{\frac{\pa \vp}{\pa n_e}}{\frac{\pa v}{\pa n_e}}, \quad\forall\vp, v\in E.
\]
Clearly,  $u_h^{\ga_1}-u_h^{\ga_2}$ is the solution of the following discrete problem:
\begin{align*}
a_h^{\ga_1}(u_h^{\ga_1}-&u_h^{\ga_2},v_h)-k^2(u_h^{\ga_1}-u_h^{\ga_2},v_h) +\i k \pd{u_h^{\ga_1}-u_h^{\ga_2},v_h} \\ 
&=(\ga_1-\ga_2)(f_h, v_h):=-\i(\ga_1-\ga_2)\sum_{e\in\cE_h^{I}}h_e \pdjj{\frac{\pa u_h^{\ga_2}}{\pa n_e}}{\frac{\pa v_h}{\pa n_e}},\quad\forall v_h\in V_h.
\end{align*}
Therefore, from Theorem~\ref{tmain2},
\begin{equation}\label{etmain3a}
\norm{u_h^{\ga_1}-u_h^{\ga_2}}_{H^1(\Om)}\lesssim \abs{\ga_1-\ga_2}\norml{f_h}{\Om}.
\end{equation}
Since any two norms on $V_h$ are equivalent,  from Theorem~\ref{tmain2} we have
\[\norml{f_h}{\Om}\lesssim C(h)\norm{u_h^{\ga_2}}_{H^1(\Om)}\lesssim C(h) M(f,g),\]
where $C(h)$ is some constant which is dependent of $h$ but independent of $\ga_1$ and $\ga_2$. 
 By combining the above two estimates, we have
\[\norm{u_h^{\ga_1}-u_h^{\ga_2}}_{H^1(\Om)}\lesssim \abs{\ga_1-\ga_2} C(h) M(f,g).\]
Thus $u_h^\ga$ converges in $H^1(\Om)$ as $\ga\to 0+$. 

It remains to prove the uniqueness. Let $u_h^0$ be any solution to the FEM \eqref{efem}. By repeating the lines for deriving \eqref{etmain3a} with $\ga_2=0$, we obtain,
\[\norm{u_h^{\ga_1}-u_h^0}_{H^1(\Om)}\lesssim \ga_1\norml{f_h}{\Om},\]
where $f_h$ depends on $u_h^0$ but is independent of $\ga_1$. Therefore $\lim_{\ga_1\to 0}u_h^{\ga_1}=u_h^0$ which implies that $u_h^0=\uhfem$. This completes the proof of the theorem.
\end{proof}

\section{Nmerical examples}\label{sec-num}

Throughout this section, we consider the following two-dimensional
Helmholtz problem:
\begin{alignat}{2}
-\De u - k^2 u &=f:=\frac{\sin(kr)}{r}  &&\qquad\mbox{in  } \Om,\label{e7.1}\\
\frac{\pa u}{\pa n} +\i k u &=g &&\qquad\mbox{on } \Ga_R:=\pa\Om.\label{e7.2}
\end{alignat}
Here $\Om$ is the unit regular hexagon with center $(0,0)$ (cf. Figure~\ref{fgm})
and $g$ is so chosen that the exact solution is
\begin{equation}\label{e7.3}
u=\frac{\cos(kr)}{k}-\frac{\cos k+\i\sin k}{k\big(J_0(k)+\i J_1(k)\big)}J_0(kr)
\end{equation}
in polar coordinates, where $J_\nu(z)$ are Bessel functions of the first kind.
\begin{figure}[ht]
\centerline{\includegraphics[scale=0.7]{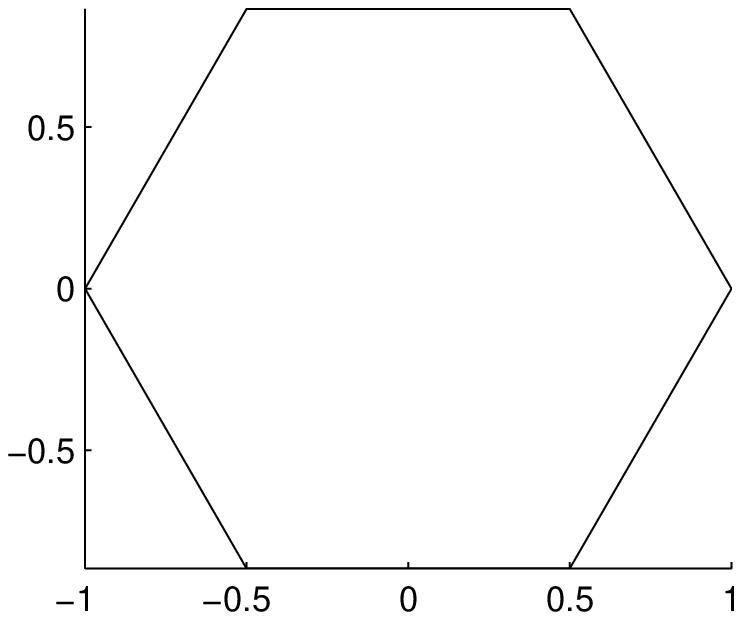} \qquad
\includegraphics[scale=0.7]{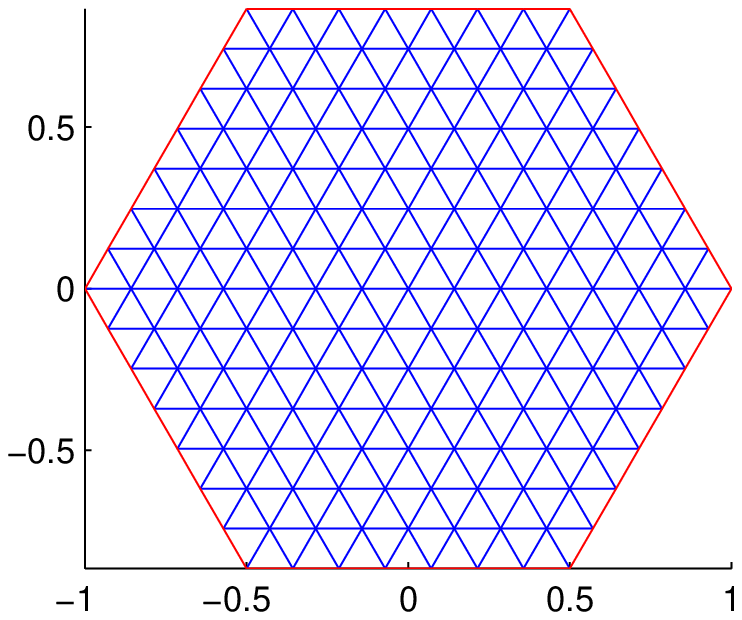}}
\caption{Geometry (left) and a sample mesh $\cT_{1/7}$ that consists of
congruent and equilateral triangles of size $h=1/7$ (right) for the
example.}\label{fgm}
\end{figure}

We remark that this problem has been computed in \cite{fw09} by using interior penalty discontinuous Galerkin methods. We use the same example for the convenience of comparison.

For any positive integer $m$, let $\cT_{1/m}$ denote the regular triangulation
that consists of $6m^2$ congruent and equilateral triangles of size $h=1/m$.
See Figure~\ref{fgm} (right) for a sample triangulation $\cT_{1/7}$. We remark that the number of total DOFs of the CIP-FEM on the triangulation $\cT_{1/m}$ is 
$3m^2+3m+1$ which is the same as that of the linear FEM and about one sixth of that of the linear IPDG method (cf. \cite{fw09}).

\subsection{Stability}\label{ssec-1} Given a triangulation $\cT_h$, recall that $u_h$ denotes the CIP finite element solution and $\uhfem$ denotes the $P_1$-conforming
finite element approximation of the problem \eqref{e7.1}--\eqref{e7.2}.
In this subsection, we use the following penalty parameters for the
CIP-FEM (cf. \eqref{eicg}):
\begin{equation}\label{e7.4}
\ga_e\equiv\ga=0.1 \quad\forall e\in\cE_h^I.
\end{equation}
Then,  according to
Theorem~\ref{tsta} and Theorem~\ref{tmain2}, we have the following stability estimate for the CIP finite element solution $u_h$.
\begin{equation}\label{e7.5}
\norml{\na u_h}{\Om}+k \norml{u_h}{\Om}\lesssim\min\big\{
 \norml{f}{\Om} + \norml{g}{\Ga},
\csta\norml{f}{\Om}+\csta^{1/2}\norml{g}{\Ga}\big\}
\end{equation}
where $\csta=\dfrac1k+\dfrac{1}{k^2 h}+\dfrac{10}{ k^3 h^2}.$
Noting that the stability estimate in $L^2$-norm is a direct consequence of that in $H^1$-seminorm (cf. Lemma~\ref{lem3.1}), we only examine the stability estimate for $\norml{\na u_h}{\Om}$. 

 Figure~\ref{fsta1} plots
the $H^1$-seminorm of the CIP finite element solution $\norml{\na u_h}{\Om}$, the $H^1$-seminorm
of the finite element solution
$\norml{\na \uhfem}{\Om}$ for $h=0.005$ and $0.002$, respectively,
and the $H^1$-seminorm of the exact solution $\norml{\na u}{\Om}$, for
$k=1, \cdots, 500$. It shows that
 $\norml{\na u}{\Om}\simeq 1$, $\norml{\na u_h}{\Om}\lesssim 1$ and $\norml{\na u_h}{\Om}$ decreases for $k$ large enough as indicated by \eqref{e7.5}. It is also shown that
$\norml{\na \uhfem}{\Om}\lesssim 1$ which is guaranteed theoretically only for $k^3h^2$ small enough (cf. Theroem~\ref{tmain3}).

\begin{figure}[ht]
\centerline{\includegraphics[scale=0.62]{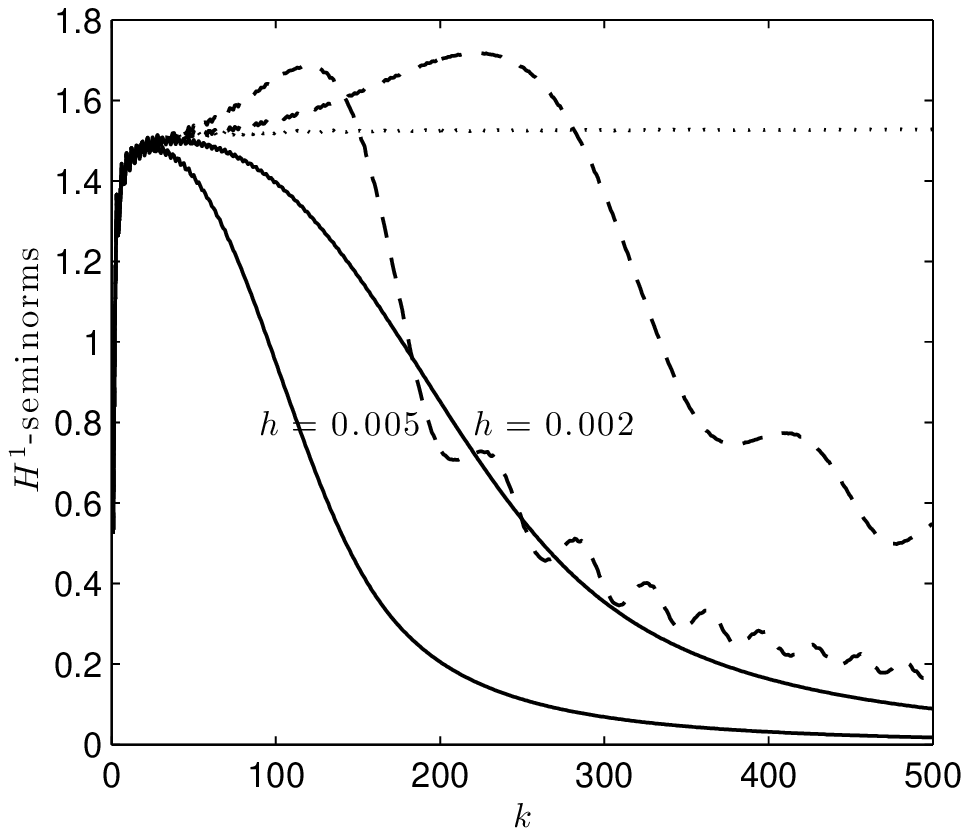}}
\caption{$\norml{\na u_h}{\Om}$ (solid), $\norml{\na \uhfem}{\Om}$  (dashed)
for $h=0.05$ and $0.005$, respectively. The dotted line gives the
$H^1$-seminorm of the exact solution $\norml{\na u}{\Om}$.}\label{fsta1}
\end{figure}

Figure~\ref{fsta2} shows stability behaviors of $u_h$ and $\uhfem$ when $kh=1$ for $k=1,2,\cdots, 500$, which satisfies the ``rule of thumb". Note that $\csta=12/k$ for $kh=1$.
It is shown that $\norml{\na u_h}{\Om}$ 
is in inverse proportion to $k$ for $k$ large which means the term $\csta\norml{f}{\Om}$ dominates the stability bound for $k\le 500$ (cf. \eqref{e7.5}). As a matter of fact, numerical integrations show that $\norml{f}{\Om}$ is more than $25$ times $\norml{g}{\Ga}$ for $k=200,\cdots,500$. 
\begin{figure}[ht]
\centerline{\includegraphics[scale=0.62]{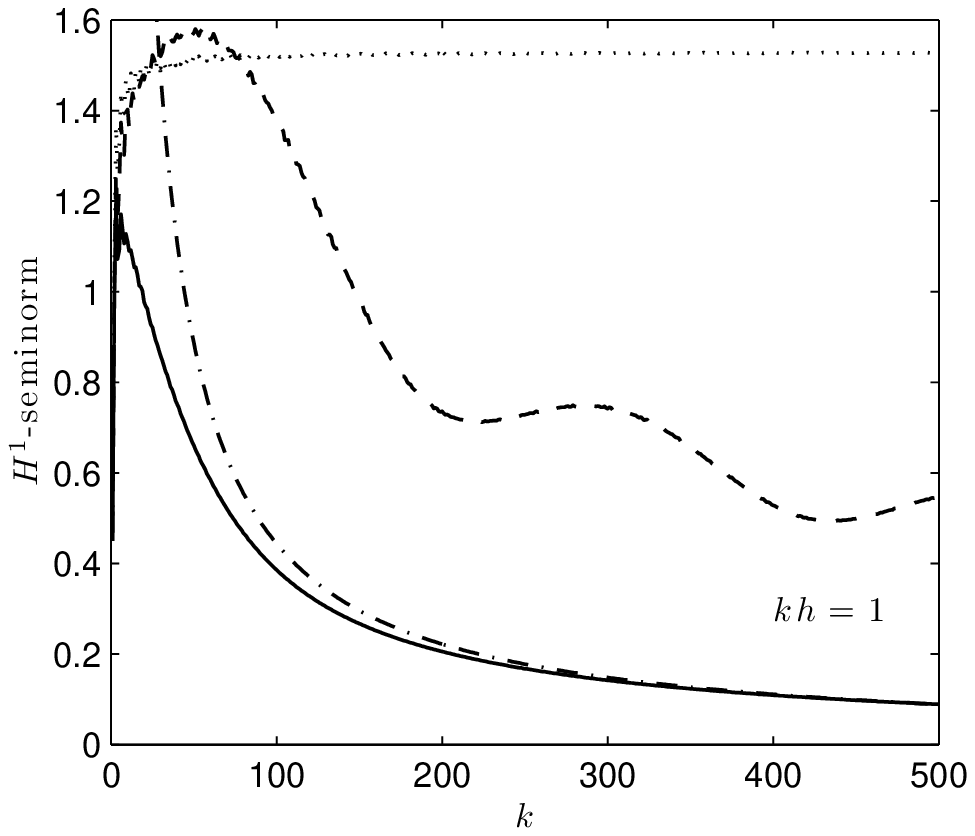}}
\caption{$\norml{\na u_h}{\Om}$ (solid) and $c/k$  (dash dotted) versus to $k$ with $kh=1$, where $c$ is some constant. The dashed line gives $\norml{\na\uhfem}{\Om}$. The dotted line gives the
$H^1$-seminorm of the exact solution $\norml{\na u}{\Om}$.}\label{fsta2}
\end{figure}

\subsection{Error estimates}\label{ssec-2} In this subsection, we use the same penalty parameter ($\ga=0.1$) for the CIP-FEM as in \eqref{e7.4}.
From Theorem~\ref{tmain} (cf. Remark~\ref{rtmain}) and Theorem~\ref{tmain2}, the error of the CIP finite element solution in the $H^1$-seminorm is bounded by
\begin{equation}\label{e7.6}
\norml{\na(u-u_h)}{\Om}\le C_1kh+C_2\min\set{k^3h^2,1}
\end{equation}
for some constants $C_1$ and $C_2$ if $kh\lesssim 1$. On the other hand, from Theorem~\ref{tmain3}, the error of the finite element solution in the $H^1$-seminorm is bounded by
\begin{equation}\label{e7.7}
\norml{\na(u-\uhfem)}{\Om}\le C_1kh+C_2k^3h^2
\end{equation}
for some constants $C_1$ and $C_2$ if $k^3 h^2\le  C_0$.
The second terms on the right hand
sides of \eqref{e7.6} and \eqref{e7.7} are the so-called pollution errors. We now present numerical results
to verify the above error bounds.

In the left graph of Figure~\ref{ferr1}, the relative error of the CIP finite element solution
with parameters given by \eqref{e7.4} and the relative error of the finite element
interpolant are displayed in one plot. When the mesh size is decreasing, the relative error of the
CIP finite element solution stays around $100\%$ before it is less than $100\%$,
then decays slowly on a range increasing with $k$, and then decays at a
rate greater than $-1$ in the log-log scale but converges as fast as the
finite element interpolant (with slope $-1$) for small $h$. The relative
error grows with $k$ along line $k h=0.25.$ By contrast, as shown in the right of Figure~\ref{ferr1}, the relative error of the finite element solution first
oscillates around $100\%$, then decays at a rate greater than $-1$ in
the log-log scale but converges as fast as the finite element interpolant
(with slope $-1$) for small $h$. The relative error of the finite element solution also grows with $k$ along line $k h=0.25. $ 
\begin{figure}[ht]
\centerline{
\includegraphics[scale=0.62]{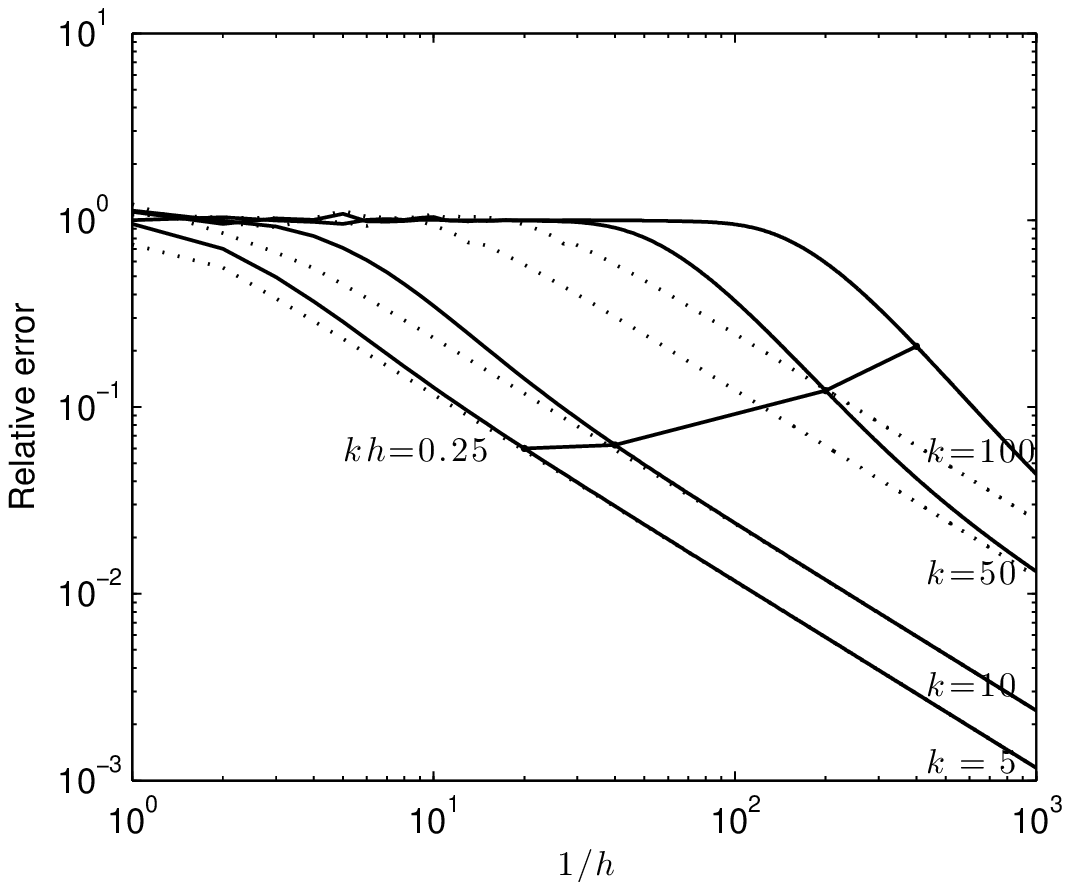}
\includegraphics[scale=0.62]{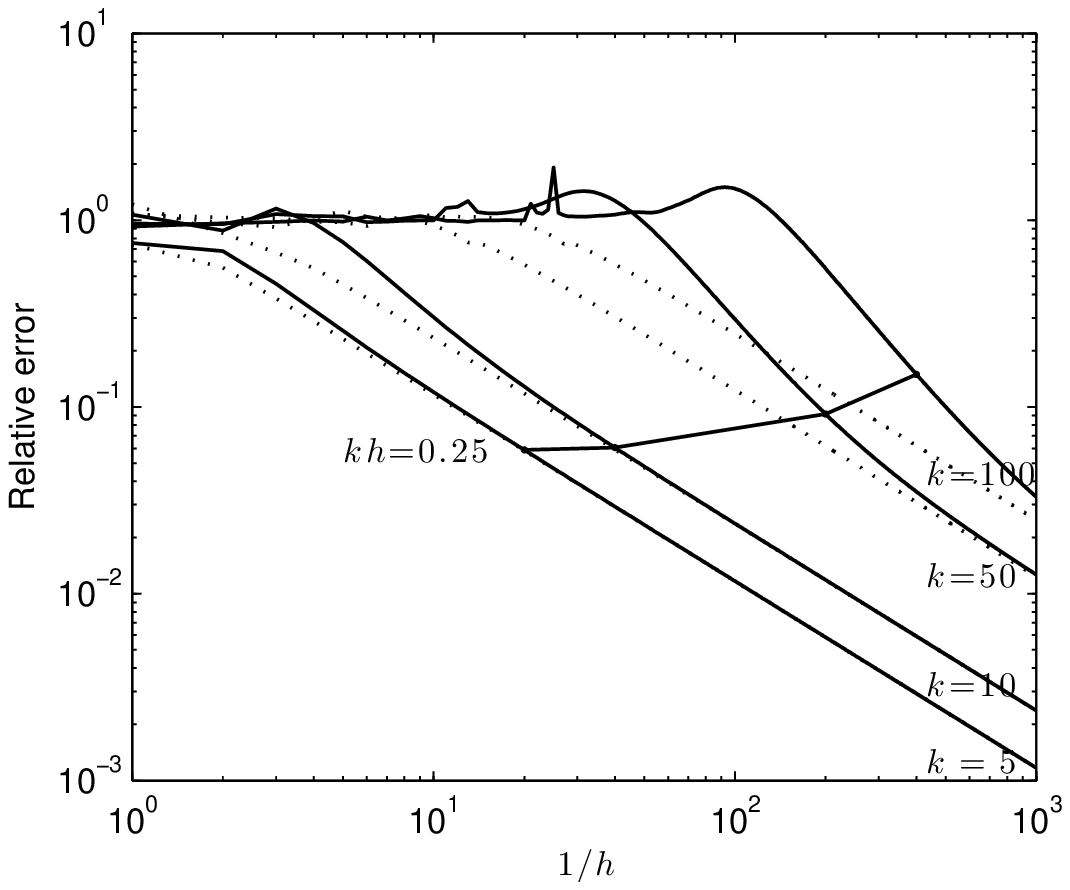}
}
\caption{Left graph: the relative error of the CIP finite element solution  with parameters
given by \eqref{e7.4} (solid) and the relative error of the finite element
interpolant (dotted) in $H^1$-seminorm for $k=5, k=10, k=50,$ and $k=100$,
respectively. Right graph: corresponding plots for finite element solutions.}\label{ferr1}
\end{figure}

Unlike the error of the finite
element interpolant, both the error of the CIP finite element solution and that of the finite element solution are not controlled by
the magnitude of $k h$ as indicated also by the two graphs in Figure~\ref{ferr2}. It is shown that when $h$ is determined according to the ``rule of thumb", the relative error of the CIP finite element solution keeps less than $100\%$  which means the CIP finite element solution has some accuracy even for large $k$, while the finite element solution is totally unusable for large $k$.  Figure~\ref{fsurf} displays the surface
plots of the real parts of the linear interpolant of the exact solution (left), the CIP finite element
solution with parameters given by \eqref{e7.4}  (center) , and the finite element solution (right), for $k=100$
on the mesh with mesh size $h=1/100$.
It is shown that the CIP finite element solution has a correct shape although
its amplitude is not very accurate. By contrast, the finite element solution has both wrong shape and amplitude. We remark that the accuracy of the CIP finite solution can be further greatly improved by tuning  the penalty parameter $\i\ga$, see Subsection~\ref{ssec-3} below.

\begin{figure}[ht]
\centerline{
\includegraphics[scale=0.62]{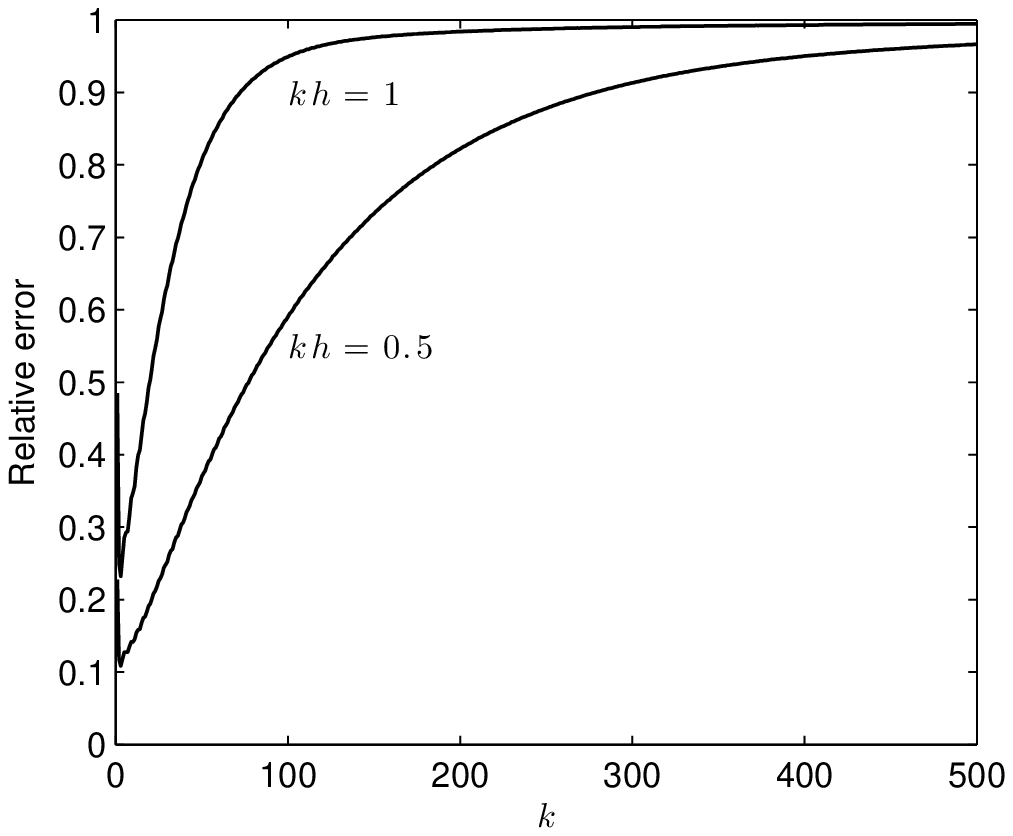}
\includegraphics[scale=0.62]{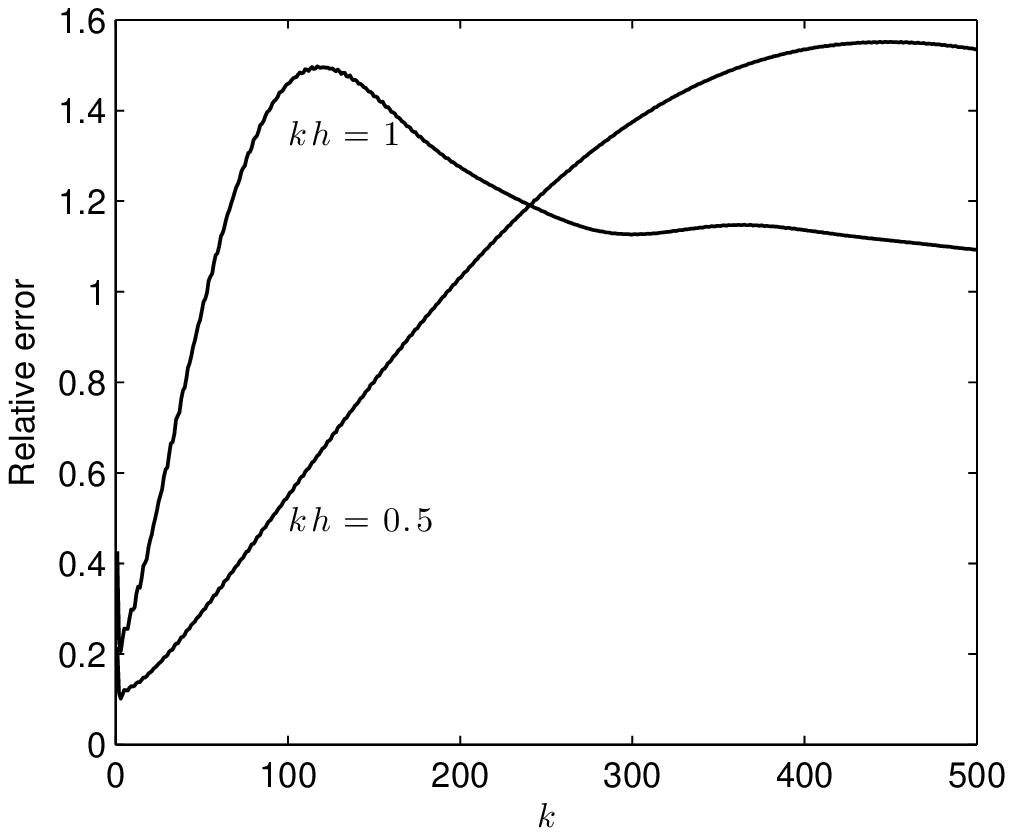}
}
\caption{The relative error of the CIP finite element solution (left) with parameters given
by \eqref{e7.4} and that of the finite element solution (right) in $H^1$-seminorm computed for $k=1, 2, \cdots, 500$ with mesh size $h$  determined by $kh=1$ and  $kh=0.5$, respectively.}\label{ferr2}
\end{figure}

\begin{figure}[ht]
\centerline{\includegraphics[scale=0.6]{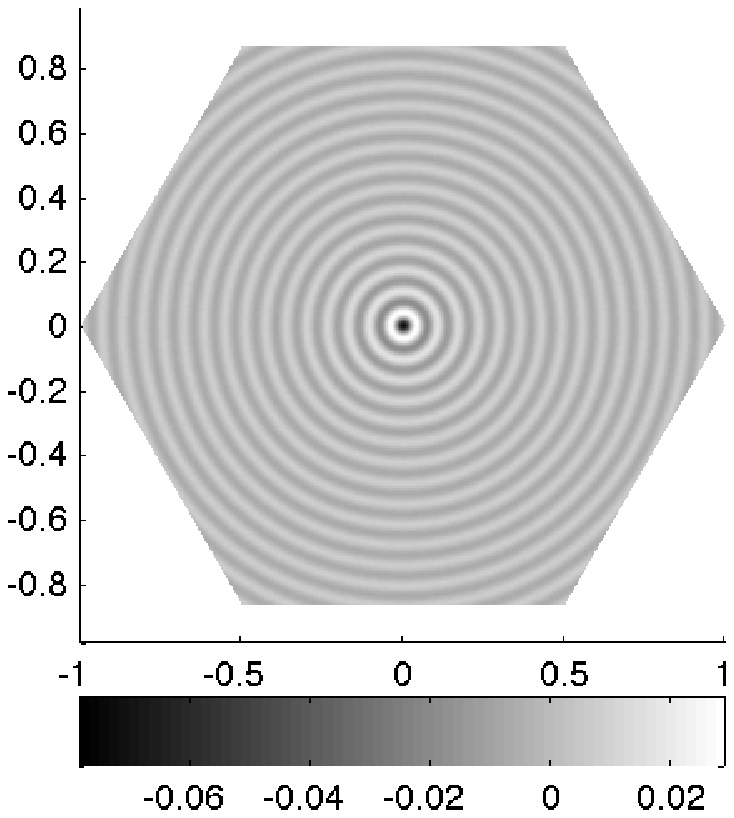}
\includegraphics[scale=0.6]{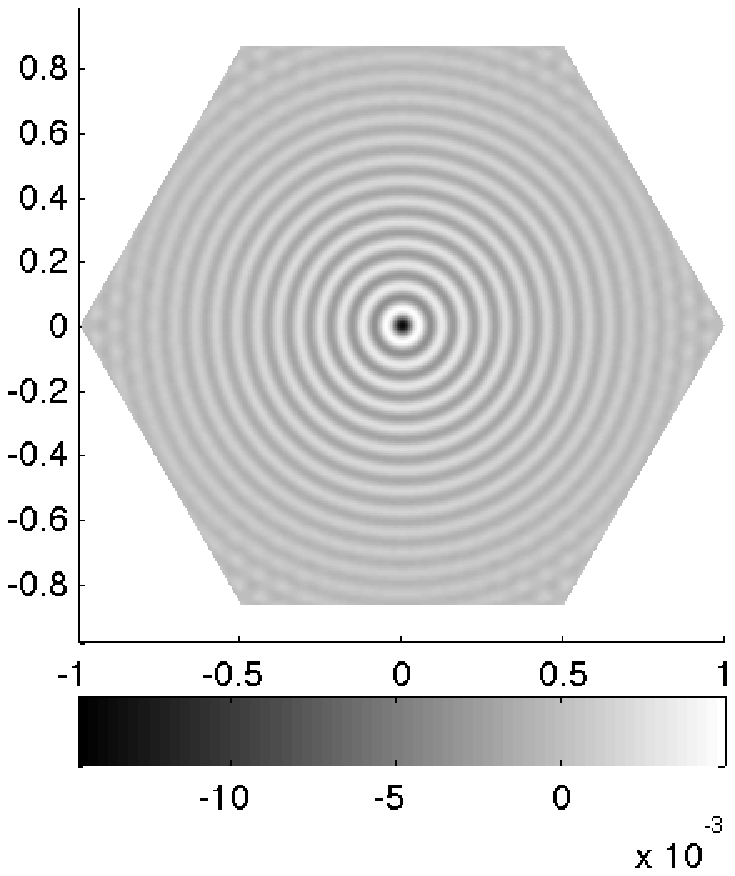}
\includegraphics[scale=0.6]{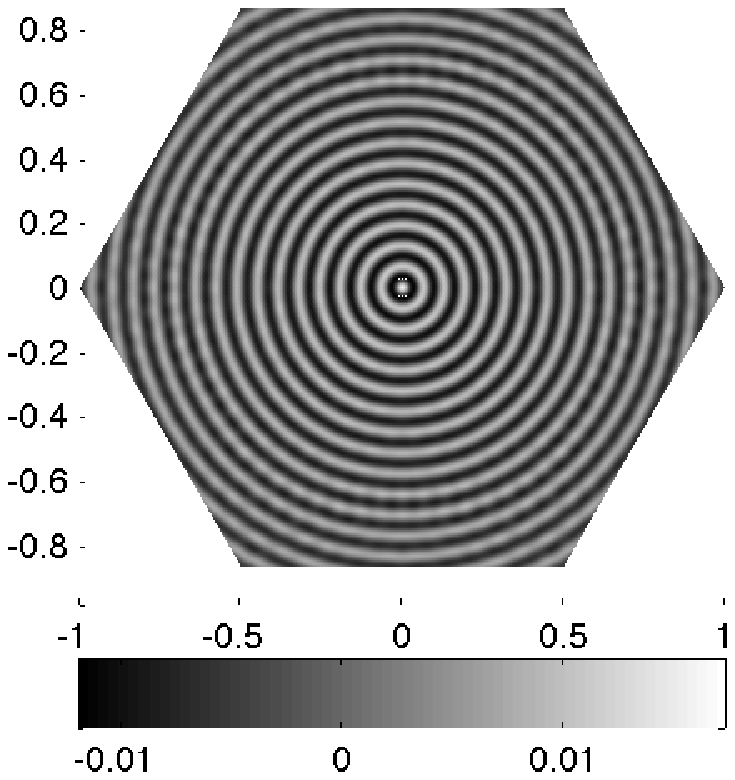}}
\caption{Surface plots of the real parts of the linear interpolant (left), the CIP finite element
solution with parameters given by \eqref{e7.4}  (center), and the finite element solution (right), for $k=100$
on the mesh with mesh size $h=1/100$.}\label{fsurf}
\end{figure}

Next we verify more precisely the pollution terms in \eqref{e7.6} and \eqref{e7.7}. To do so, we introduce the definition of the critical mesh size with respect to a given relative tolerance. 
\begin{definition}
Given a relative tolerance $\ep$ and a wave number $k$, the critical mesh size $h(k,\ep)$ with respect to the relative tolerance $\ep$  is defined by the maximum mesh size such that the relative error of the CIP finite element solution (or the finite element solution) in $H^1$-seminorm is less than or equal to $\ep$.
\end{definition}

It is clear that, if the pollution terms in \eqref{e7.6} and \eqref{e7.7} are of order $k^3h^2$, then $h(k,\ep)$ should be proportional to $k^{-3/2}$ for $k$ large enough. This is verified by Figure~\ref{ferr3} which plots $h(k,0.5)$ versus $k$ for the CIP finite element solution (left) with parameters given
by \eqref{e7.4} and for the finite element solution (right), respectively. We remark that the maximum wave number such that $h(k,0.5)\ge 0.001$ is $k_{\max}=266$ for the CIP-FEM with parameters given by \eqref{e7.4} and is $k_{\max}=280$ for the FEM. Note that if the mesh size $h=0.001$, then the number of total DOFs of the CIP finite element system is $3,003,001$, so is that of the FEM. Therefore, $k=k_{\max}$ is the maximum wave number such that the problem \eqref{e7.1}--\eqref{e7.2} can be approximated by the CIP-FEM (or FEM) with  relative error in $H^1$-seminorm $\le50\%$ while using at most $3,003,001$ total DOFs.
\begin{figure}[ht]
\centerline{
\includegraphics[scale=0.62]{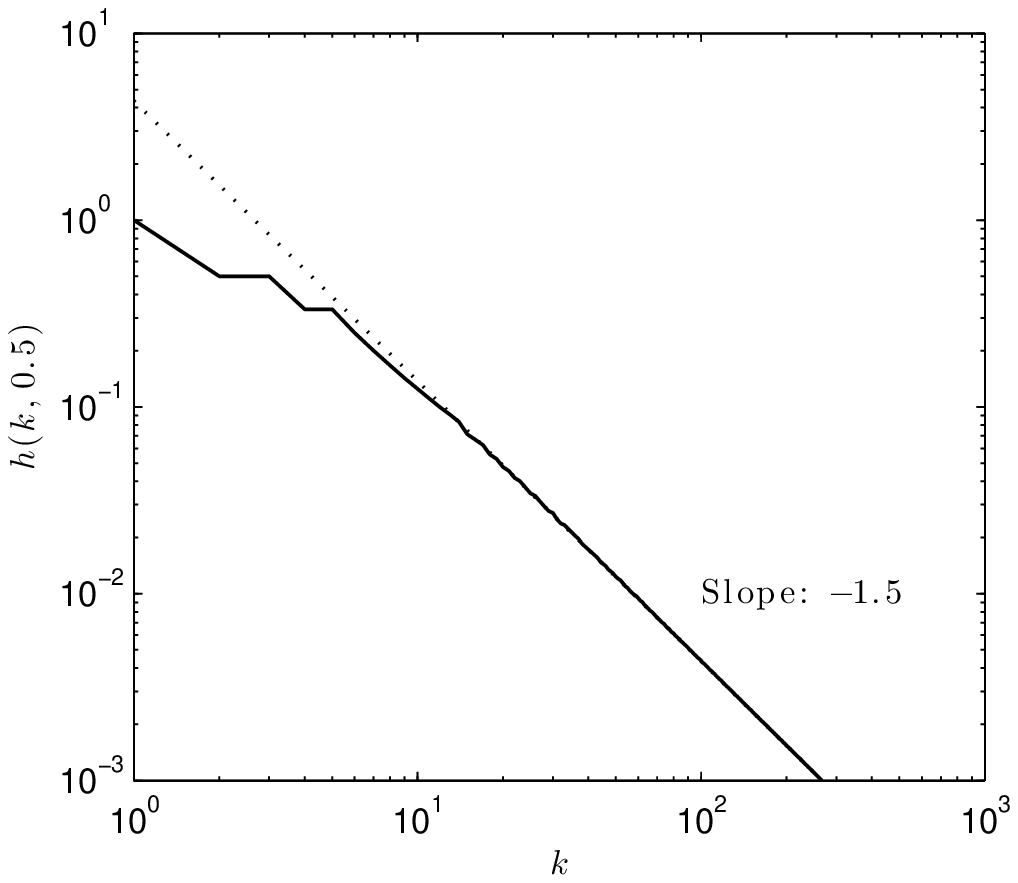}
\includegraphics[scale=0.62]{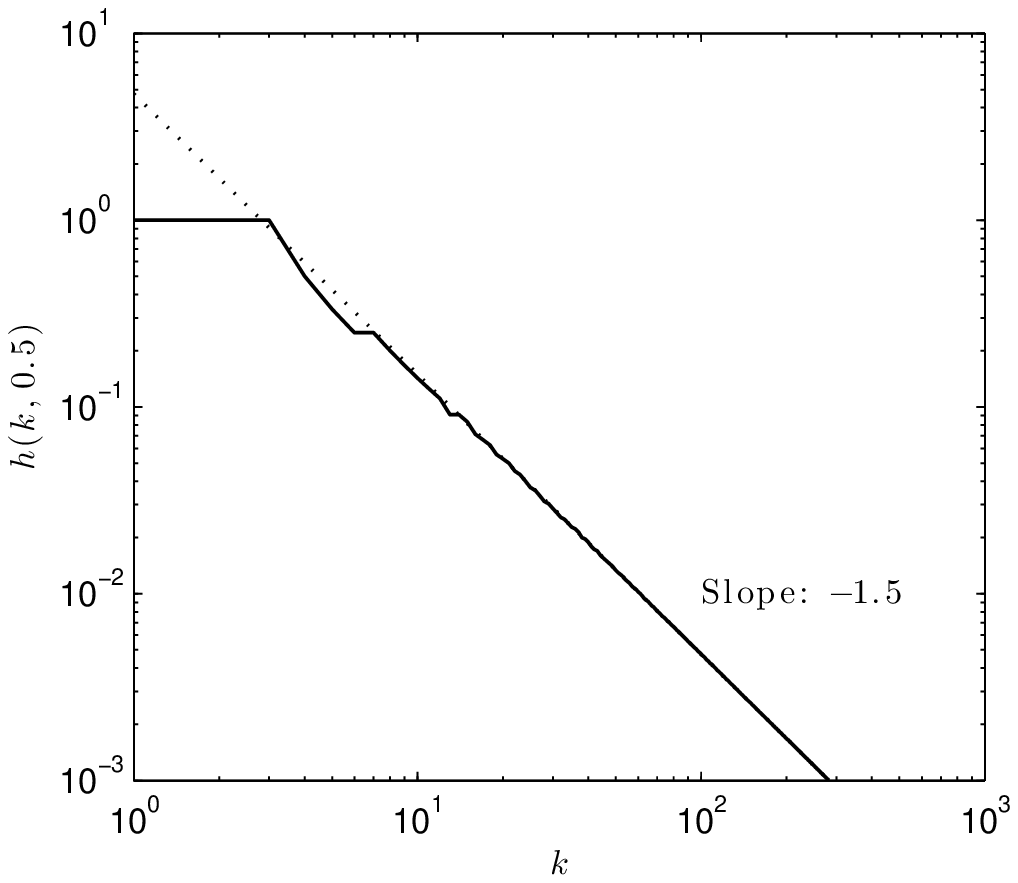}
}
\caption{$h(k,0.5)$ versus $k$ for the CIP-FEM (left) with parameters given
by \eqref{e7.4} and for the FEM (right), respectively. The dotted lines give lines of slope $-1.5$ in the log-log scale.}\label{ferr3}
\end{figure}

\subsection{Reduction of the pollution effect}\label{ssec-3}
 In \cite{fw09}, it is shown that appropriate choice of the penalty parameters
 can significantly reduce the pollution error of the symmetric IPDG method. 
 In this subsection, we shall show that the same thing holds true for the CIP-FEM. We use the following parameters:
\begin{equation}\label{e7.8}
    \i\ga_e\equiv\i\ga=-0.07+0.01\i.
\end{equation}
We remark that this choice of $\i\ga_e$ is the same as the choice of the penalty parameters $\i\ga_{1,e}$ from \cite[Subsection~6.4]{fw09} for the IPDG method.

The relative error of the CIP finite element solution with parameters given by \eqref{e7.8}
and the relative error of the finite element interpolant are displayed in the
left graph of Figure~\ref{ferr12b}. The CIP-FEM with  parameters given
by \eqref{e7.8} is much better than both the CIP-FEM using  parameters given
by \eqref{e7.4} and the FEM (cf. Figure~\ref{ferr1} and Figure~\ref{ferr2}). The relative error does not
increase significantly with the change of $k$ along line
$k h=0.25$ for $k\le 100$. But this does not mean that the pollution error
has been eliminated.
\begin{figure}[ht]
\centerline{
\includegraphics[scale=0.62]{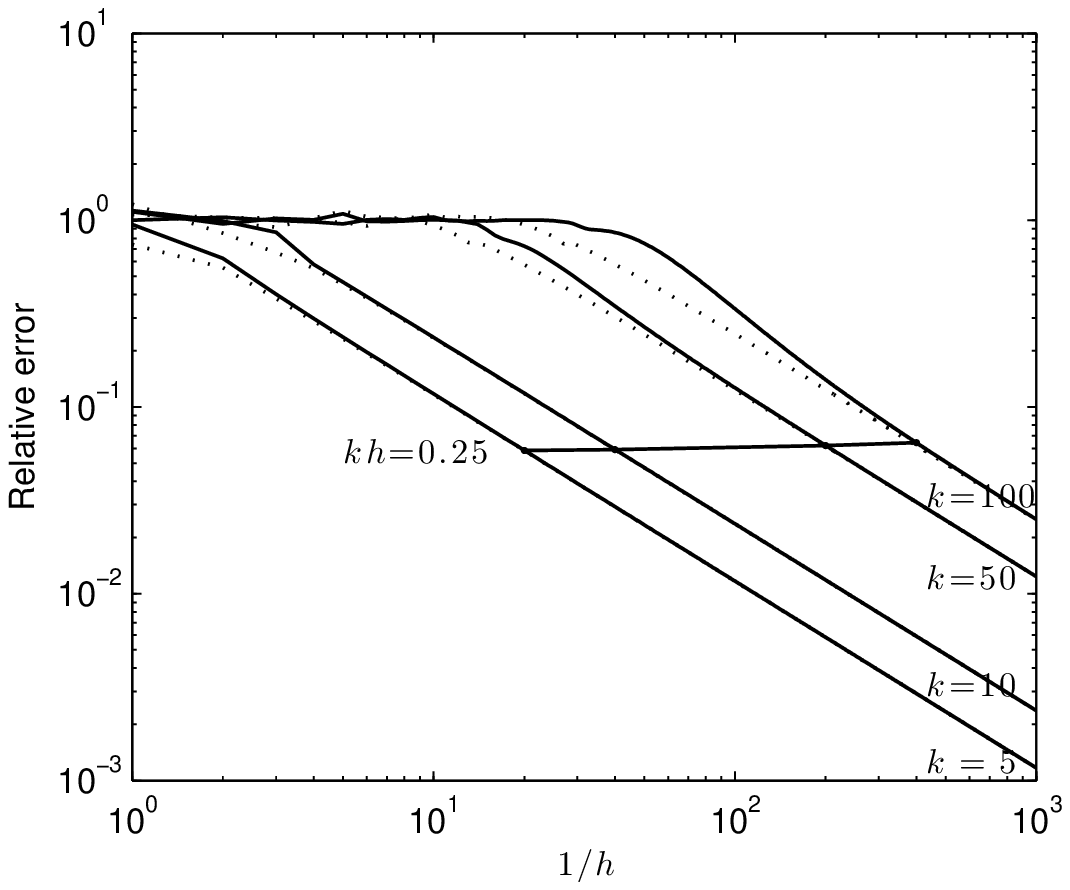}
\includegraphics[scale=0.62]{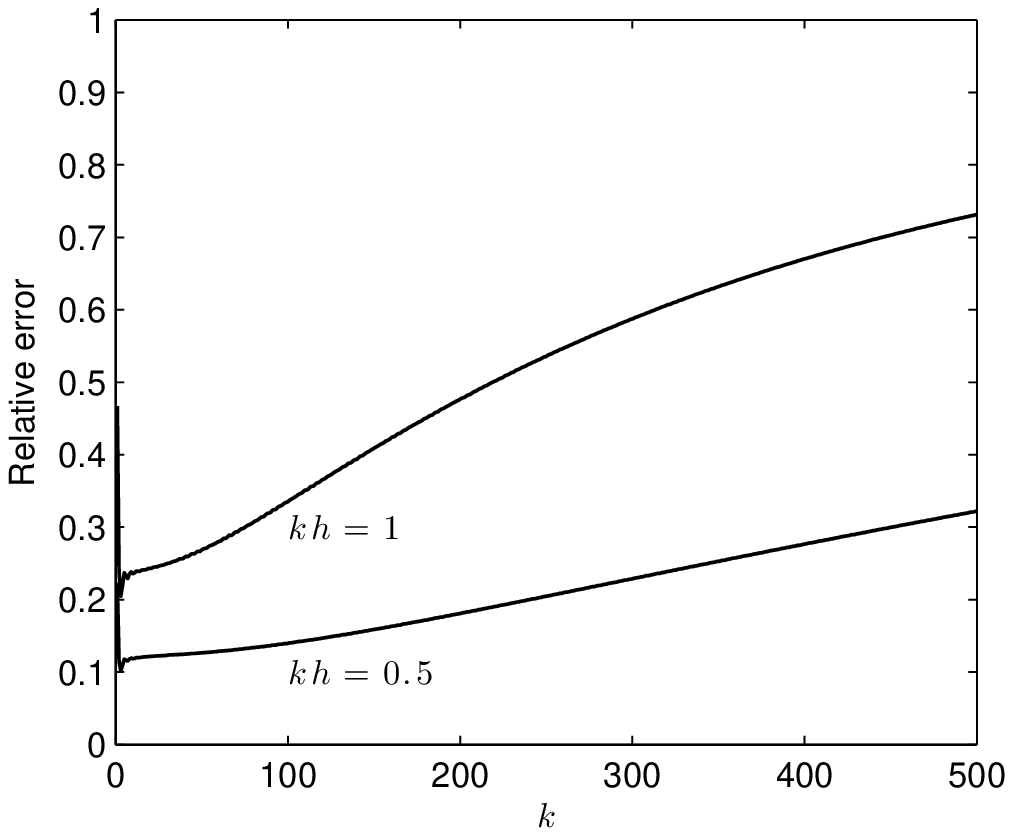}
}
\caption{Left graph: the relative error of the CIP finite element solution  with parameters given
by \eqref{e7.8} (solid) and the relative error of the finite element
interpolant (dotted) in $H^1$-seminorm for $k=5, k=10, k=50,$
and $k=100$, respectively. Right graph: the relative error of the CIP finite element solution with
parameters given by \eqref{e7.8} in $H^1$-seminorm computed for $k=1, 2, \cdots, 500$
with mesh size $h$ determined by $kh=1$ and $kh=0.5$, respectively.}\label{ferr12b}
\end{figure}
For more detailed observation, the relative errors of the CIP finite element solution
with parameters given by \eqref{e7.8}, computed for all
integer $k$ from $1$ to $500$  for $k h=1$ and $k h=0.5$, are plotted in
the right graph of Figure~\ref{ferr12b}. It is shown that the pollution error is
reduced significantly.

Figure~\ref{ferr3b} plots $h(k,0.5)$, the critical mesh size with respect to the relative tolerance $50\%$, versus $k$ for the CIP-FEM with parameters given
by \eqref{e7.8}. We recall that $h(k,0.5)$ is the maximum mesh size such that the relative error of the CIP finite element solution in $H^1$-seminorm is less than or equal to $50\%$. The decreasing rate of $h(k,0.5)$ in the log-log scale is less than $-1.5$ for $k$ from $1$ to a relatively large value, which means that the pollution effect is reduced.  
We remark that the maximum wave number under the condition $h(k,0.5)\ge 0.001$ is $k_{\max}=622$ for the CIP-FEM with parameters given by \eqref{e7.8} which is more than twice of that for the FEM.
\begin{figure}[ht]
\centerline{
\includegraphics[scale=0.62]{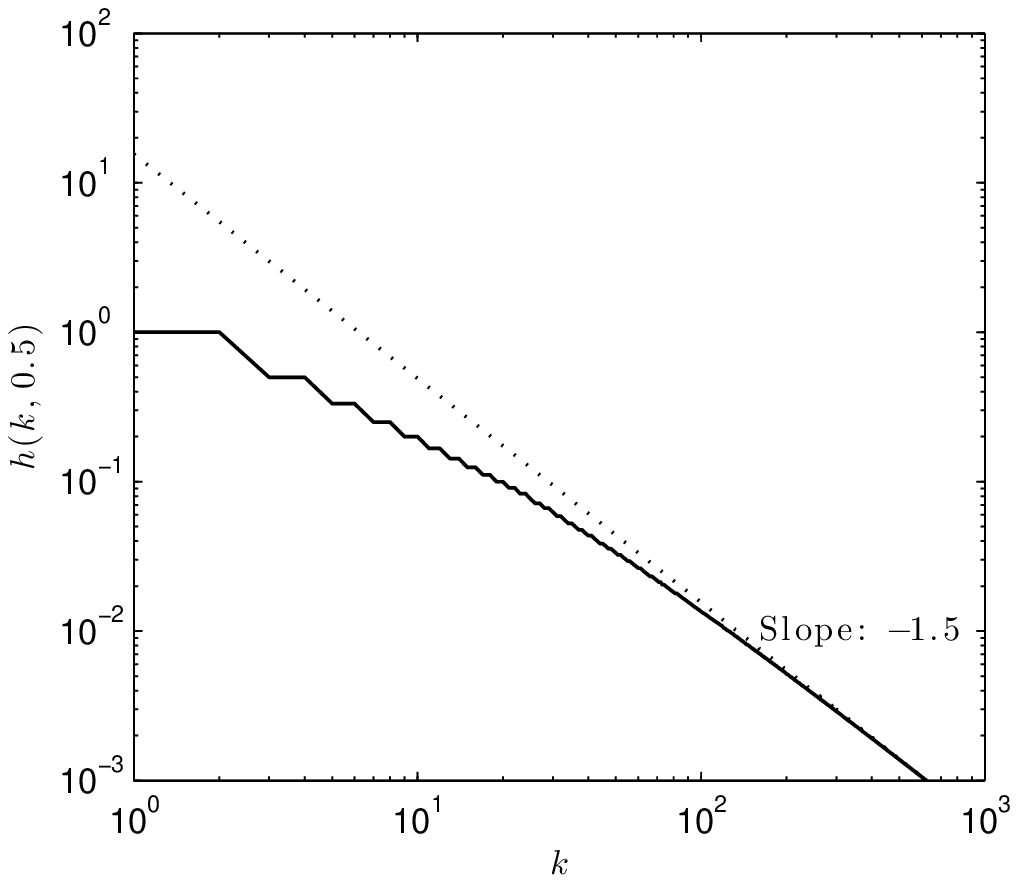}
}
\caption{$h(k,0.5)$ versus $k$ for the CIP-FEM with parameters given
by \eqref{e7.8}. The dotted line gives a line of slope $-1.5$ in the log-log scale}\label{ferr3b}
\end{figure}

 For more detailed comparison between the CIP-FEM and the FEM, we consider the problem
\eqref{e7.1}--\eqref{e7.2} with wave number $k=100$.
The traces of the CIP finite element solutions with parameters given by \eqref{e7.8}
and the finite element solutions in the $xz$-plane for mesh sizes
$h=1/50, 1/120$, and $1/200$, and the trace of the exact
solution in the $xz$-plane, are plotted in Figure~\ref{ftrace}. The shape
of the CIP finite element solution is roughly same as that of the exact solution
for $h=1/50$. They match very well for $h=1/120$ and even
better for $h=1/200$. While the finite element solution has a wrong
shape near the origin for $h=1/50$ and $h=1/120$ and only has a correct
shape for $h=1/200$. The phase error appears in all the three cases for
the finite element solution. We remark that the figures in the left of Figure~\ref{ftrace} look almost the same as those in the left of Figure~6.11 in \cite{fw09}, which means that the CIP-FEM has almost the same accuracy as the IPDG method analyzed in \cite{fw09} on the same mesh while using about one sixth of total DOFs of it.
\begin{figure}[ht]
\centerline{\includegraphics[scale=0.6]{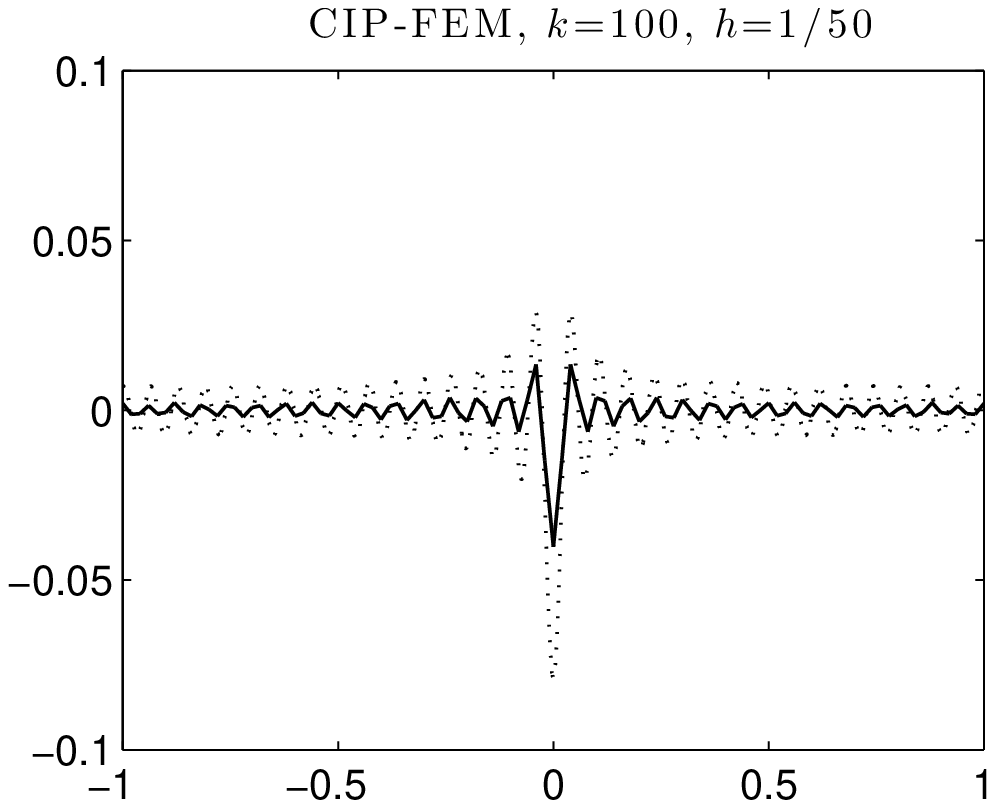}
\includegraphics[scale=0.6]{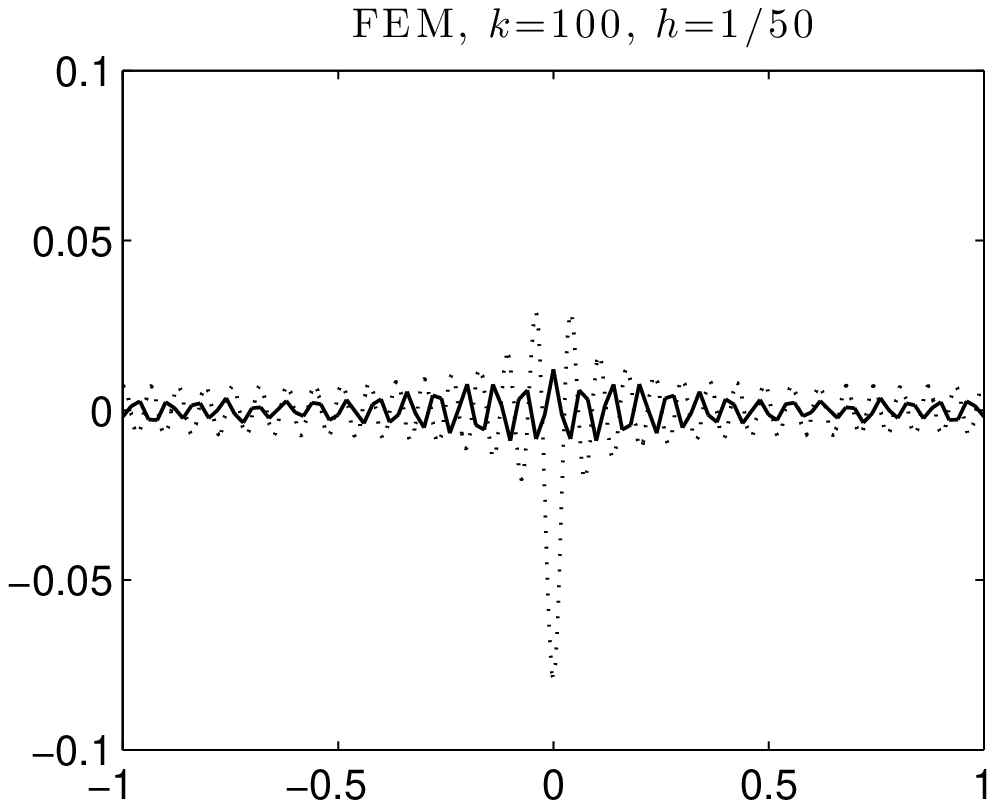}}
\centerline{\includegraphics[scale=0.6]{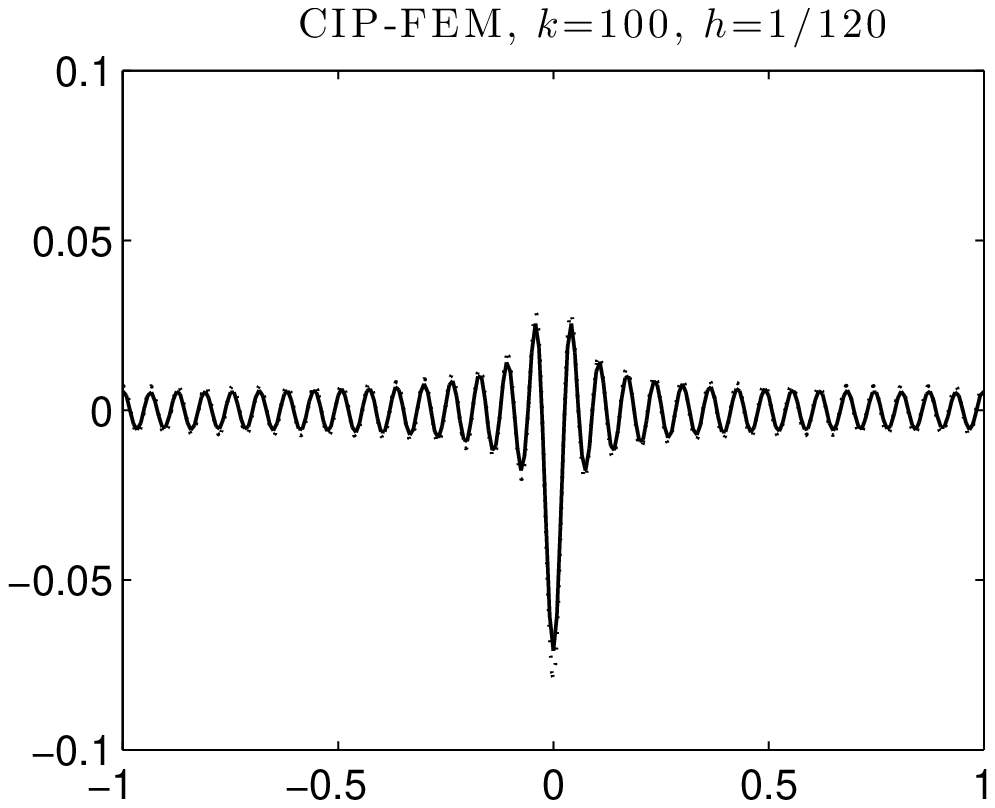}
\includegraphics[scale=0.6]{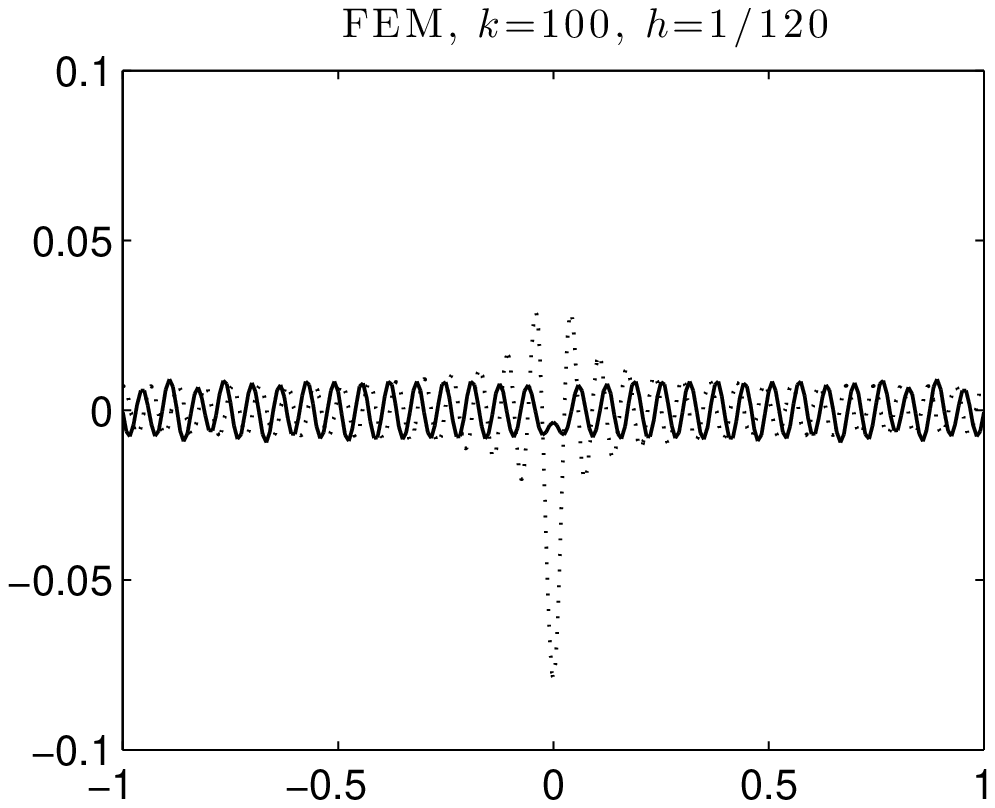}}
\centerline{\includegraphics[scale=0.6]{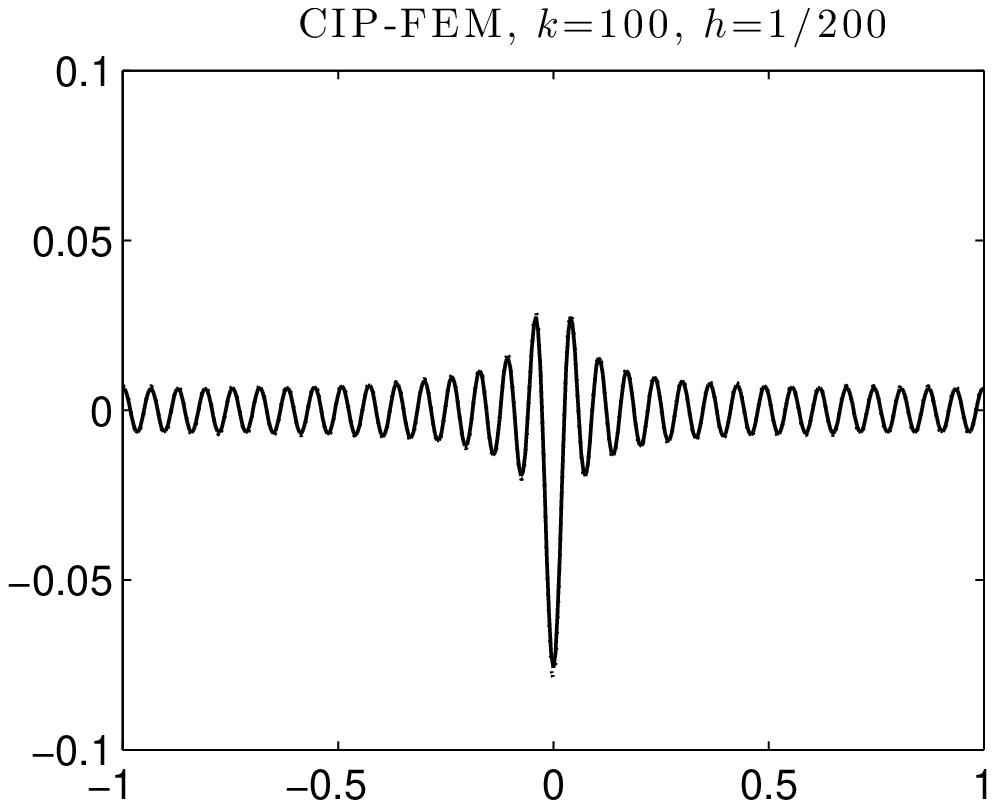}
\includegraphics[scale=0.6]{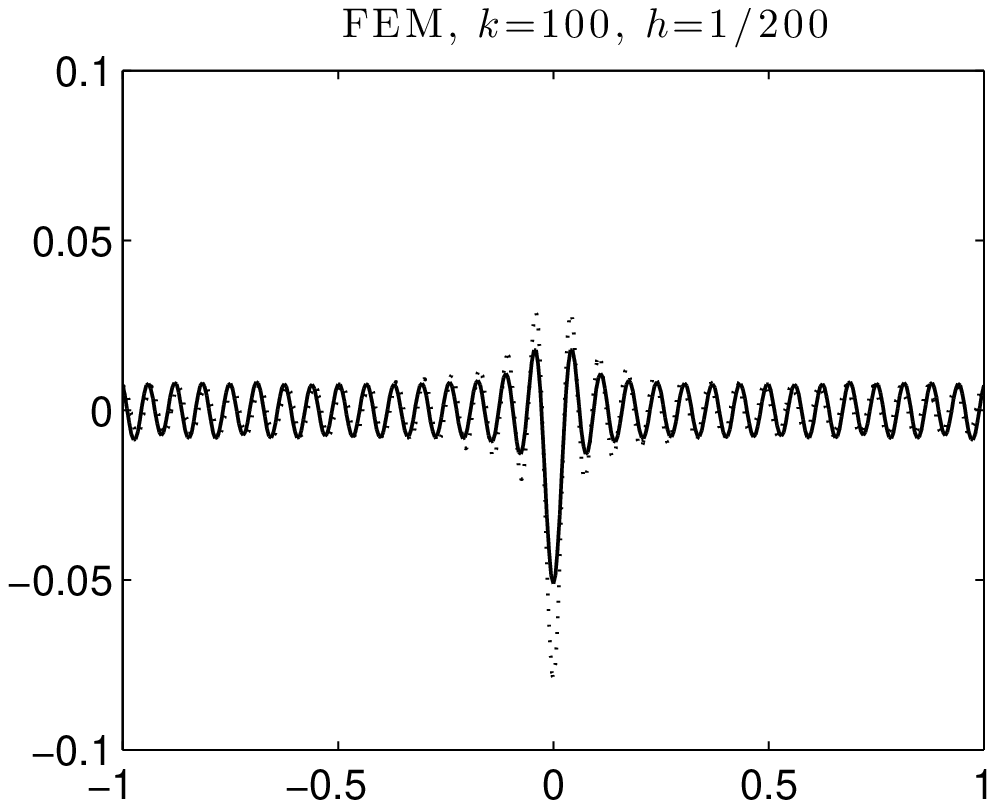}}
\caption{The traces of the CIP finite element solutions (left) with parameters given by
\eqref{e7.8}  and the finite element solutions (right) in the $xz$-plane
for $k=100$ and mesh sizes $h=1/50, 1/120$, and $1/200$, respectively.
The dotted lines give the trace of the exact solution in  the $xz$-plane.}
\label{ftrace}
\end{figure}

Table~\ref{table} shows the numbers of total DOFs needed for $30$\%
relative errors in $H^1$-seminorm for the finite element  interpolant, the CIP finite element solution  with parameters given by
\eqref{e7.8}, the finite element
solution, and the IPDG solution in \cite[Subsection 6.5]{fw09}, respectively.  The CIP-FEM needs less DOFs than the FEM does in all cases and much less for large wave number $k$. The IPDG method needs about six times as many total DOFs as the CIP-FEM to achieve the same accuracy but needs less DOFs than the FEM does when
$k=100, 200,$ and $300$.
\begin{table}[ht]
\centering
\begin{tabular}{|l|r|r|r|r|r|}
  \hline
    $k$ & 10 & 50 & 100 & 200 & 300 \\\hline
    Interpolation & 217& 5,167  & 20,419  & 81,181 & 182,287  \\\hline
    CIP-FEM & 217 & 6,487 & 35,971 & 239,419 & 754,507 \\\hline                  
    FEM & 397 & 30,301 & 229,357 & 1,804,201  & 6,053,461 \\\hline
    IPDG & 1,152 & 38,088 & 217,800 & 1,431,432 & 4,518,018\\\hline
  \end{tabular}
\caption{Numbers of total DOFs needed for
30\% relative errors
in $H^1$-seminorm for the finite element interpolant, the CIP finite element solution, the finite element solution, and the IPDG solution
in \cite[Subsection 6.5]{fw09},
respectively.}\label{table}
 \end{table}

\end{document}